\numberwithin{equation}{section}
\numberwithin{figure}{section}
\theoremstyle{plain}
\newtheorem{thm}{Theorem}
\newtheorem{lem}[thm]{Lemma}
\newtheorem{prop}[thm]{Proposition}
\newtheorem{cor}[thm]{Corollary}
\theoremstyle{definition}
\newtheorem*{defn*}{Definition}
\newtheorem*{example*}{Example}
\newcommand{\bigslant}[2]{{\raisebox{.2em}{$#1$}\left/\raisebox{-.2em}{$#2$}\right.}}
\newcommand{\orcid}[1]{\href{https://orcid.org/#1}{\includegraphics[width=10pt]{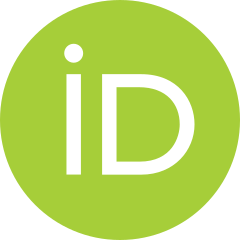}  orcid.org/#1}}
\title{\vspace{-0.5cm}\LARGE \textsc{\scalebox{0.92}[1.0]{Foundations of Structural Statistics:}\\\scalebox{0.92}[1.0]{Topological Statistical Theory}}\vspace{-0.5cm}}
\author{\normalsize P. Michl \orcid{0000-0002-6398-0654}}
\date{}
\begin{document}

\twocolumn[
\begin{@twocolumnfalse}  

\maketitle

\begin{abstract}
\vspace{-1.9cm}
\begin{adjustwidth}{10mm}{10mm}
Topological statistical theory provides the foundation for a modern mathematical reformulation of classical statistical theory: Structural Statistics emphasizes the structural assumptions that accompany distribution families and the set of structure preserving transformations between them, given by their statistical morphisms. The resulting language is designed to integrate complicated structured model spaces like deep-learning models and to close the gap to topology and differential geometry. To preserve the compatibility to classical statistics the language comprises corresponding concepts for standard information criteria like sufficiency and completeness.
\\\\
\textbf {Keywords:} Statistical Equivalence, Statistical Morphism, Topological Statistical Model
\end{adjustwidth}
\vspace{0.5cm}
\end{abstract}
\end{@twocolumnfalse}
] 

\section{Introduction}
As a passionate advocate of observation based statistical inference in the first third of the $20^{\text{th}}$ century \textsc{R. A. Fisher} introduced the concepts of \emph{sufficient statistics}, as well as a measure of information, which is invariant with respect to them \cite{Fisher1935}. This \emph{Fisher information} encouraged \textsc{S. Kullback} and \textsc{R. A. Leibler} in the mid-$20^{\text{th}}$ century to further investigations, regarding the correspondence between statistics and the uprising information theory of \textsc{C. E. Shannon }\cite{Kullback1959}. In the course of their analysis a further invariant structure became apparent, which became known as the \emph{Kullback-Leibler divergence} \cite{Kullback1951}. Although this structure permitted to quantify invariance properties, it did not provide a generic framework, which motivated the incorporation of reversible Markov processes and further statistical \emph{divergence functions} \cite{Csiszar1963,Morimoto1963}. A generic theory of equivalence and invariance of statistical models, however, still is missing! A first step in this direction requires to substantiate the objects of desire:

\begin{defn*}[Statistical Model]
\label{def:Statistical-model}\emph{Let $(S,\,\Sigma)$ be a measurable space - the sample space, $T\colon(\Omega,\,\mathcal{F},\,\mathrm{P})\to(S,\,\Sigma)$ an i.i.d. random variable - a statistic, and $\mathcal{M}$ a set of probability distributions over $(S,\,\Sigma)$  - the model space. Then the 3-tuple $(S,\,\Sigma,\,\mathcal{M})$ is termed a statistical model.}
\end{defn*}

\section{\label{subsec:Statistical-Equivalence}From Statistical Equivalence to Statistical Morphisms}
The choice of a statistical model is generally subjected to the convenience of the experimenter and therefore far away from being consistent! This ambiguity concerns the underlying sample space and statistic, by the applied physical measurement method, as well as the model space, by the interpretation of the structural knowledge. As observation based statistical inference however is obligated to provide unique conclusions at the end of the day, an immediate equivalence relationship is induced by statistical models that allow the same conclusions for the same observations.

The \emph{Observational Equivalence} of statistical models describes the circumstance, that no outcome of the respectively underlying statistical population may be used to distinguish which model provides a closer estimation of the population distribution and consequentially of any population parameter. As observational equivalence therefore precisely describes the preservation of observation based statistical inference, it may be regarded and emphasized as a canonical blueprint for \emph{Statistical Equivalence}. The disadvantage of this description, however, is it's Brutforce approach, which makes it inapplicable for complicated statistical models.

A quite different approach exploits the representation of the model spaces in their embedding function space: Very intuitively the model spaces may be regarded as sets of particles within their surroundings and statistical equivalence is provided, if and only if the respective particle sets can reversibly be transferred to each other by \emph{Markov processes}. Thereby the reversibility assures, that the orbits of observational distinguishable distribution assumptions do not intersect, such that ``no information is lost'' while the Markov property assures, that the target model space is completely determined by the domain model space, such that ``no additional information is generated''. This  encourages the following definition:
\begin{defn*}[Statistical Equivalence]
\label{def:Statistical-equivalence}\emph{Let $(X,\,\mathcal{A},\,\mathcal{M})$ and $(Y,\,\mathcal{B},\,\mathcal{N})$ be statistical models. Then they are termed statistically equivalent, iff there exists a reversible Markov process from $X$ to $Y$, which pushes $\mathcal{M}$ to $\mathcal{N}$.}
\end{defn*}
Although the above definition stands out for its ease of comprehension and simplicity, it loses the coupling between the model spaces, the sample spaces and the underlying statistics. Therefore in the following a novel approach is developed, that closes the gaps of the above given definitions. Let

\[
T_{X}\colon(\Omega,\,\mathcal{F},\,\mathrm{P})\to(X,\,\mathcal{A})
\]
and

\[
T_{Y}\colon(\Omega,\,\mathcal{F},\,\mathrm{P})\to(Y,\,\mathcal{B})
\]
be the underlying statistics, then 
\[
Z\coloneqq\{T_{X},\,T_{Y}\}
\]
is a stochastic processes and trivially satisfies the Markov property, such that for any $P\in\mathcal{M}$ and $Q\in\mathcal{N}$ there exists an a.e. unique transition probability $\mathrm{p}_{X,Y}$. Conversely also 
\[
Z^{-1}\coloneqq\{T{}_{Y},\,T_{X}\}
\]
satisfies the Markov property, such that for $P$ and $Q$ there also exists an a.e. unique transition probability $\mathrm{p}_{Y,X}$. With respect to a given pair $P$ and $Q$ the reversibility of $Z$ may therefore be verified by the principle of \emph{detailed balance}, which postulates that $Z$ is reversible, iff an \emph{equilibrium distribution} $\pi$ over $(X\times Y,\,\mathcal{A}\otimes\mathcal{B})$ exists, such that 
\[
\mathrm{p}_{12}(y\mid x)\pi(x)=\mathrm{p}_{21}(x\mid y)\pi(y),\,\forall(x,\,y)\in X\times Y
\]
This criterion however appears to be trivially satisfied by \emph{Bayes' theorem}: 
\[
\mathrm{P}(A\mid B)P(B)=\mathrm{P}(B\mid A)P(A),\,\forall(A,\,B)\in\mathcal{A}\otimes\mathcal{B}
\]
However in order to establish this relationship the conditional dependencies $\mathrm{P}(A\mid B)$ have to be endowed with a density function. As the extension of conditional probabilities to continuous probability spaces, however, provides some peculiarities, further considerations have to be taken into account: Let $(X,\,\mathcal{A},\,\mu)$ and $(Y,\,\mathcal{B},\,\nu)$ be continuous probability spaces and by abuse of notation let ``$\mathrm{P}(y\mid x)$'' denote the probability of an outcome $y\in Y$ under the precondition of $x\in X$, regardless of whether it's well-defined. Then a naive interpretation of the term$\mathrm{P}(y\mid x)$ for any given $y\in Y$ on the one hand implicates, that $\mathrm{P}(y\mid x)=0$ for all $x\in X$, since:

\[
\mathrm{P}(y\mid x)=\mu(Z_{1}\circ Z_{2}^{-1}(y)\cap x)\leq\mu(x)=0
\]
On the other hand however, there has at least to be one $x\in X$ with $\mathrm{P}(y\mid x)>0$, since:

$$\int_{X}\mathrm{P}(y\mid x)\mathrm{d}\mu(x)=1>0\,\lightning$$

In order to avoid an inconsistent notation the conditional probabilities are therefore represented by \emph{transition operators} and afterwards generalized to a continuous notation.
\begin{lem}
\label{lem:1} Let $(X,\,\mathcal{A},\,\mu)$ be a discrete probability space with $\mathcal{A}=\mathcal{P}(X)$ and $(Y,\,\mathcal{B})$ a discrete measurable space with $\mathcal{B}=\mathcal{P}(Y)$. Let further be 
\[
T\colon(X,\,\mathcal{A})\to(Y,\,\mathcal{B})
\]
a measurable function, which induces the transition operator \emph{$\tau$}
by $\tau(\mu)=T_{*}\mu$ and $\mathrm{P}(y\mid x)$ the conditional
probabilities from $X$ to$Y$, then: 
\begin{equation}
\mathrm{P}(y\mid x)=\tau(\delta_{x})(y),\,\forall(x,\,y)\in X\times Y\label{eq:lem:3.4:1}
\end{equation}
\end{lem}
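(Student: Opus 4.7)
The plan is to unfold both sides of the claimed identity directly from their definitions and observe that each reduces to the indicator $\mathbb{1}[T(x)=y] = \delta_{T(x)}(y)$. Because the setting is fully discrete with $\mathcal{A}=\mathcal{P}(X)$ and $\mathcal{B}=\mathcal{P}(Y)$, every singleton is measurable and the pathological behaviour of the continuous case (discussed in the motivation) does not arise, so no density considerations are required.

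First I would compute the right-hand side. By the definition of the pushforward, $\tau(\delta_{x})(B)=T_{*}\delta_{x}(B)=\delta_{x}(T^{-1}(B))$ for every $B\in\mathcal{B}$. Specialising to $B=\{y\}$ gives $\tau(\delta_{x})(y)=\delta_{x}(T^{-1}(\{y\}))=\mathbb{1}[x\in T^{-1}(\{y\})]=\mathbb{1}[T(x)=y]$. This is the clean part; it only uses the definition of $T_{*}$ and the fact that singletons lie in $\mathcal{B}$.

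Next I would compute the left-hand side. Following the standard discrete definition of conditional probability, for $x$ with $\mu(\{x\})>0$ one has $\mathrm{P}(y\mid x)=\mu(\{x\}\cap T^{-1}(\{y\}))/\mu(\{x\})$. A case distinction on whether $T(x)=y$ immediately yields the value $1$ in the affirmative case (since then $\{x\}\subseteq T^{-1}(\{y\})$) and $0$ otherwise (empty intersection). Hence $\mathrm{P}(y\mid x)=\mathbb{1}[T(x)=y]$, matching the right-hand side, which proves \eqref{eq:lem:3.4:1}.

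The only delicate point, and the one I expect to need a brief comment, is the treatment of atoms $x$ with $\mu(\{x\})=0$: there the conditional probability is not defined by the elementary quotient, but any choice consistent with the disintegration of $\mu$ along $T$ agrees almost everywhere with $\mathbb{1}[T(x)=y]$, so the identity still holds in the a.e.\ sense that Bayes' theorem requires. Apart from this caveat the argument is entirely definitional; there is no real combinatorial or measure-theoretic obstacle to overcome.
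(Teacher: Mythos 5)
Your argument is correct, and it reaches the identity by a genuinely more direct route than the paper. You evaluate both sides outright: the right-hand side via $\tau(\delta_{x})=T_{*}\delta_{x}=\delta_{T(x)}$, so that $\tau(\delta_{x})(y)=\delta_{x}(T^{-1}(\{y\}))$ is the indicator of $T(x)=y$, and the left-hand side via the elementary quotient definition of conditioning on the atom $\{x\}$, which gives the same indicator. The paper instead takes a detour: it expands $\tau(\delta_{x})(T(x))$ as a sum over $a\in X$ weighted by $\delta_{x}(\{a\})$, invokes the law of total probability to produce a parallel sum in terms of $\mathrm{P}(T(x)\mid a)$, matches the two sums using the Dirac property to settle the case $y=T(x)$, and then extends to general $y$ by an induction over $(X\times Y,\,\mathcal{A}\otimes\mathcal{B})$ using $\mathrm{P}(y\mid x)=0$ for $y\ne T(x)$ together with a product rule for disjoint unions. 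Your version buys three things: it is shorter; it makes visible the one fact that actually drives the lemma (a pushforward of a Dirac measure is a Dirac measure); and it dispenses with the paper's extension step, whose stated rule $\mathrm{P}(y_{1}\cup y_{2}\mid x)=\mathrm{P}(y_{1}\mid x)\mathrm{P}(y_{2}\mid x)$ for disjoint $y_{1},y_{2}$ should in any case be additive rather than multiplicative. What the paper's presentation buys is the explicit appearance of the law of total probability, which is the structural template the author wants to carry over to the continuous setting via the Radon--Nikodym derivative in the subsequent definition. You are also more careful than the paper on one point: the lemma asserts the identity for all $(x,\,y)$, yet for atoms with $\mu(\{x\})=0$ the elementary conditional probability is undefined, so only an a.e.\ or conventional reading is available; the paper's proof passes over this silently, while you flag it correctly.
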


\begin{proof}
For $x\in X$ let $\delta_{x}$ denote the Dirac measure over $(X,\,\mathcal{A})$, then:
\begin{equation}
\tau(\delta_{x})(T(x))=\sum_{a\in X}\tau(\delta_{x})(T(a))\delta_{x}(\{a\})\label{eq:lem:3.4:2}
\end{equation}
Furthermore by applying the law of total probability, it also holds that:
\begin{equation}
\tau(\delta_{x})(T(x))=\sum_{a\in X}\mathrm{P}(T(x)\mid a)\delta_{x}(\{a\})\label{eq:lem:3.4:3}
\end{equation}
A subsequent substitution of equation \ref{eq:lem:3.4:3} in equation \ref{eq:lem:3.4:2} yields:

\begin{equation}
\sum_{a\in X}\mathrm{P}(T(x)\mid a)\delta_{x}(\{a\})=\sum_{a\in X}\tau(\delta_{x})(T(a))\delta_{x}(\{a\})\label{eq:lem:3.4:4}
\end{equation}
And since $\delta_{x}(\{a\})=1\Leftrightarrow x=a$ equation \ref{eq:lem:3.4:4} dissolves to a representation:
\[
\mathrm{P}(T(x)\mid x)=\tau(\delta_{x})(T(x))
\]
This proves lemma \ref{lem:1} for $y=T(x)$. Since $\mathrm{P}(y\mid x)=0$
for $y\notin T(x)$ and 
\[
\mathrm{P}(y_{1}\cup y_{2}\mid x)=\mathrm{P}(y_{1}\mid x)\mathrm{P}(y_{2}\mid x)
\]
 for $y_{1}\cap y_{2}=\emptyset$ the generic case follows by complete
induction over the discrete product measurable space $(X\times Y,\,\mathcal{A}\otimes\mathcal{B})$.
\end{proof}
\begin{defn*}[Regular conditional probability]
\label{def:Regular-conditional-probabilty} \emph{Let $(X,\,\mathcal{A},\,\mu)$ be a probability space, $(Y,\,\mathcal{B})$ a measurable space and $T\colon(X,\,\mathcal{A})\to(Y,\,\mathcal{B})$ a measurable function,
which induces the transition operator $\tau$ by $\tau(\mu)=T_{*}\mu$. Then the regular conditional probability from $X$ to $Y$ is given by:
\begin{equation}
\mathrm{p}(y\mid x)\coloneqq\frac{\mathrm{d}\tau(\delta_{x})}{\mathrm{d}\tau(\mu)}(y),\,\forall(x,\,y)\in X\times Y\label{eq:def:Regular-conditional-probability:1}
\end{equation}
}
\end{defn*}
Due to the definition regular conditional probabilities resolve the contradiction of infinitesimal measures by the properties of transition operators and therefore essentially by the Radon-Nikodym derivative. Although the definition only regards the case of a conditional probability $\mathrm{p}(y\mid x)$ from $X$ to $Y$ it implicates a dual conditional probability $\mathrm{p}(x\mid y)$ from $Y$ to $X$, which existence and uniqueness is postulated by the following lemma.

\begin{lem}[\emph{Dual conditional probability}]
\label{lem:2}Let $(X,\,\mathcal{A},\,\mu)$ be a probability space, $(Y,\,\mathcal{B})$ a measurable space and 
\[
T\colon(X,\,\mathcal{A})\to(Y,\,\mathcal{B})
\]
a measurable function\emph{,} which induces the transition operator $\tau$ by $\tau(\mu)=T_{*}\mu$. Then the regular conditional probability from $Y$ to $X$ is a.e. uniquely given by: 
\begin{equation}
\mathrm{p}(x\mid y)\stackrel{\text{a.e.}}{=}\frac{\mu(x)}{\tau(\mu)(y)}\frac{\mathrm{d}\tau(\delta_{x})}{\mathrm{d}\tau(\mu)}(y),\,\forall(x,\,y)\in X\times Y\label{eq:lem:3.5:1}
\end{equation}
\end{lem}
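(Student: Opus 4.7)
The plan is to obtain the formula from Bayes' theorem applied to the joint distribution of $(x, T(x))$ on the product space and to extract a.e.\ uniqueness from the Radon-Nikodym theorem.

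First I would form the joint measure $\Pi$ on $(X\times Y,\mathcal{A}\otimes\mathcal{B})$ via $\Pi(A\times B):=\int_A \tau(\delta_x)(B)\,d\mu(x)$. Its $X$-marginal is $\mu$ by construction, and its $Y$-marginal is $\tau(\mu)$ (using $\tau(\mu)=T_{*}\mu$ and Fubini--Tonelli). The forward disintegration supplied by the preceding definition then represents $\Pi$ as $\Pi(dx,dy)=p(y\mid x)\,\mu(dx)\otimes\tau(\mu)(dy)$, with $p(y\mid x)=d\tau(\delta_x)/d\tau(\mu)\,(y)$.

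Second, I would invoke the measure-theoretic disintegration (Rokhlin) theorem along the $Y$-marginal to produce a $\tau(\mu)$-a.e.\ unique Markov kernel $K_{2}\colon Y\times\mathcal{A}\to[0,1]$ satisfying $\Pi(dx,dy)=\tau(\mu)(dy)\,K_{2}(y,dx)$. By the very pattern of the preceding definition, the dual regular conditional probability $p(x\mid y)$ must be the density of $K_{2}(y,\cdot)$ that makes the two disintegrations of $\Pi$ agree. Equating the two representations on measurable rectangles and differentiating with respect to a common product reference measure on $X\times Y$ converts the identity into the pointwise Bayes relation $p(x\mid y)\,\tau(\mu)(y)=\mu(x)\,p(y\mid x)$ at the density level, which is exactly the lemma's claim once divided by $\tau(\mu)(y)$.

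The main obstacle will be the measure-theoretic interpretation of the symbols $\mu(x)$ and $\tau(\mu)(y)$: in the continuous setting these are only meaningful as densities relative to auxiliary reference measures, and the formula implicitly requires $\mu$ and $\tau(\mu)$ to admit such densities so that the Bayes ratio is defined. A secondary but routine point is the handling of the null set $\{y:\tau(\mu)(y)=0\}$, on which the right-hand side is undefined; the $\tau(\mu)$-a.e.\ uniqueness of the Radon-Nikodym derivative absorbs this ambiguity and justifies the \emph{a.e.}\ qualifier in the statement.
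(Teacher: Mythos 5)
Your proposal follows the same skeleton as the paper's proof: both construct the joint measure on $(X\times Y,\,\mathcal{A}\otimes\mathcal{B})$ whose disintegration along $X$ is given by $\mathrm{p}(y\mid x)=\frac{\mathrm{d}\tau(\delta_{x})}{\mathrm{d}\tau(\mu)}(y)$ against $\mu$, obtain a second disintegration along the $Y$-marginal, and equate the two to read off the Bayes ratio. Where you genuinely differ is in how the reverse disintegration is produced and why it is a.e.\ unique. The paper appeals to its Lemma \ref{lem:1} to ``postulate'' a further transition operator $\tau^{*}$ with $\tau^{*}(\nu)=\mu$, defines $\pi^{*}(y,\,x)=\frac{\mathrm{d}\tau^{*}(\delta_{y})}{\mathrm{d}\mu}(x)\,\nu(y)$, and then asserts without argument that $\pi=(\pi^{*})^{*}$ holds $\pi$-a.e.; uniqueness is attributed to the uniqueness of $\tau$ and $\tau^{*}$. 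You instead invoke the measure-theoretic disintegration (Rokhlin) theorem along the $Y$-marginal to obtain a $\tau(\mu)$-a.e.\ unique Markov kernel $K_{2}$ and identify $\mathrm{p}(x\mid y)$ with its density. Your route is the more defensible one: Lemma \ref{lem:1} is stated only for discrete spaces and for operators induced by an already given measurable function, so it does not actually supply a $\tau^{*}$ from $Y$ to $X$ in the continuous setting, and the identity $\pi=(\pi^{*})^{*}$ is precisely the assertion that the two disintegrations agree --- the very thing that needs proving. What your route buys is a rigorous existence-and-uniqueness statement for the reverse kernel; what it costs is a regularity hypothesis (e.g.\ that $(X,\,\mathcal{A})$ is standard Borel) that the lemma as stated does not impose, and you should flag that explicitly. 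Your closing observations --- that $\mu(x)$ and $\tau(\mu)(y)$ are meaningful only as densities with respect to auxiliary reference measures, and that the set $\{y\mid\tau(\mu)(y)=0\}$ must be absorbed into the a.e.\ qualifier --- correctly identify notational weaknesses that the paper leaves implicit rather than gaps in your own argument.
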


\begin{proof}
Let $\nu\coloneqq T_{*}\mu$, then lemma \ref{lem:1} postulates the
existence of a transition operator $\tau$ that satisfies $\tau(\mu)=\nu$.
By applying the definition of the regular conditional probability
$\mathrm{p}(y\mid x)$, a probability measure $\pi$ over the product
measurable space $(X\times Y,\,\mathcal{A}\otimes\mathcal{B})$ is
given by:
\begin{equation}
\pi(x,\,y)=\frac{\mathrm{d}\tau(\delta_{x})}{\mathrm{d}\nu}(y)\mu(x)\label{eq:lem:3.5:2}
\end{equation}
Conversely lemma \ref{lem:1} also postulates the existence of a further
transition operator $\tau^{*}$, that satisfies $\tau^{*}(\nu)=\mu$.
Then $\tau^{*}$ induces a probability measure $\pi^{*}$ over the
dual product measurable space $(Y\times X,\,\mathcal{B}\otimes\mathcal{A})$
by:
\begin{equation}
\pi^{*}(y,\,x)=\frac{\mathrm{d}\tau^{*}(\delta_{y})}{\mathrm{d}\mu}(x)\nu(y)\label{eq:lem:3.5:3}
\end{equation}
Let $(\pi^{*})^{*}$ be the dual product measure of $\pi^{*}$, then
$\pi=(\pi^{*})^{*}$ for $\pi$-almost all $(x,\,y)\in X\times Y$.
Let now be $\mathrm{p}(x\mid y)$ be the regular conditional probability,
which is given by $\tau^{*}$, then the substitution of lemma
\ref{lem:2} and equation \ref{eq:lem:3.5:2} yields a representation:
\[
\mathrm{p}(x\mid y)=\frac{\mathrm{d}\tau^{*}(\delta_{y})}{\mathrm{d}\mu}(x)=\frac{\mu(x)}{\nu(y)}\frac{\mathrm{d}\tau(\delta_{x})}{\mathrm{d}\nu}(y)
\]
The property of $\tau$ to be unique for $\mu$-almost all $x\in X$
and of $\tau^{*}$ to be unique for $\nu$-almost all $y\in Y$ prove
that $\mathrm{p}(x\mid y)$ is a.e. unique. Equation \ref{eq:lem:3.5:1}
therefore follows by the identity$\nu=\tau(\mu)$.
\end{proof}
\begin{thm}[\emph{Continuous Bayes' Theorem}]
\label{thm:Continuous-Bayes-Theorem}Let $(X,\,\mathcal{A},\,\mu)$
and $(Y,\,\mathcal{B},\,\nu)$ be probability spaces with $\mu\preccurlyeq\nu$
and $T\colon(X,\,\mathcal{A})\to(Y,\,\mathcal{B})$ a measurable function
with $\nu=T_{*}\mu$. Then for $x,\,y\in X\times Y$:
\begin{equation}
\mathrm{p}(x\mid y)\stackrel{\text{a.e.}}{=}\frac{\mu(x)}{\nu(y)}\mathrm{p}(y\mid x)\label{eq:lem:3.5:1-1}
\end{equation}
\end{thm}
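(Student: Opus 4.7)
The plan is to obtain the theorem as a direct corollary of Lemma \ref{lem:2}, by rewriting its right-hand side in terms of the already defined regular conditional probability $\mathrm{p}(y\mid x)$. The strategy will be to identify the pushforward $\tau(\mu)$ with the measure $\nu$ via the hypothesis $\nu = T_*\mu$, and then to collapse the resulting Radon-Nikodym factor.

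First I would invoke Lemma \ref{lem:2} to obtain
\[
\mathrm{p}(x\mid y)\stackrel{\text{a.e.}}{=}\frac{\mu(x)}{\tau(\mu)(y)}\frac{\mathrm{d}\tau(\delta_{x})}{\mathrm{d}\tau(\mu)}(y).
\]
Since the transition operator associated with $T$ satisfies $\tau(\mu)=T_{*}\mu$ by construction, the hypothesis $\nu = T_{*}\mu$ yields $\tau(\mu)=\nu$, so both occurrences of $\tau(\mu)$ in the above expression will be replaced by $\nu$.

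Next I would recognise, via the definition of the regular conditional probability, that
\[
\mathrm{p}(y\mid x)=\frac{\mathrm{d}\tau(\delta_{x})}{\mathrm{d}\tau(\mu)}(y)=\frac{\mathrm{d}\tau(\delta_{x})}{\mathrm{d}\nu}(y).
\]
Substituting this identification into the reformulated Lemma \ref{lem:2} will immediately produce the claimed equality $\mathrm{p}(x\mid y)\stackrel{\text{a.e.}}{=}\frac{\mu(x)}{\nu(y)}\mathrm{p}(y\mid x)$.

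The main obstacle I expect is not the chain of substitutions itself, which is essentially bookkeeping, but rather the interpretation of the formal symbols $\mu(x)$ and $\nu(y)$ in the continuous setting, where they must be read as densities rather than point masses (exactly the notational conflict flagged before Lemma \ref{lem:1}). This is where the absolute continuity hypothesis $\mu\preccurlyeq\nu$ will play its role, guaranteeing that $\tau(\delta_x)\ll\nu$ for $\mu$-almost every $x$ and that the quotient $\mu(x)/\nu(y)$ is $\nu$-a.e. well-defined as a ratio of Radon-Nikodym derivatives. Since all a.e.-uniqueness concerns are already absorbed into Lemma \ref{lem:2}, no further measure-theoretic argument should be required.
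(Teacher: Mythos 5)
Your proposal is correct and takes essentially the same route as the paper: both proofs invoke Lemma \ref{lem:2}, identify $\tau(\mu)$ with $\nu$ via the hypothesis $\nu=T_{*}\mu$, and recognise the remaining factor $\frac{\mathrm{d}\tau(\delta_{x})}{\mathrm{d}\nu}(y)$ as $\mathrm{p}(y\mid x)$. The only cosmetic difference is that you attribute this last identification to the definition of regular conditional probability while the paper cites Lemma \ref{lem:1}, which amounts to the same step.
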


\begin{proof}
By applying lemma \ref{lem:1} and lemma \ref{lem:2} it immediate
follows that for $\nu$-almost all $(x,\,y)\in X\times Y$ it holds
that:
\[
\mathrm{p}(x\mid y)\stackrel{\text{lem} \ref{lem:2}}{=}\frac{\mu(x)}{\tau(\mu)(y)}\frac{\mathrm{d}}{\mathrm{d}\tau(\mu)}\tau(\delta_{x})(y)\stackrel{\text{lem} \ref{lem:1}}{=}\frac{\mu(x)}{\nu(y)}\mathrm{p}(y\mid x)
\]
\end{proof}
Let now $(X,\,\mathcal{A},\,\{\mu\})$ and $(Y,\,\mathcal{B},\,\{\nu\})$
be singleton statistical models of $(\Omega,\,\mathcal{F},\,\mathrm{P})$
and $Z=\{T_{1},\,T_{2}\}$ a Markov process which is given by the
underlying statistics 
\[
T_{X}\colon(\Omega,\,\mathcal{F},\,\mathrm{P})\to(X,\,\mathcal{A})
\]
and 
\[
T_{Y}\colon(\Omega,\,\mathcal{F},\,\mathrm{P})\to(Y,\,\mathcal{B})
\]
Then the measurable function $T=T_{Y}\circ T_{X}^{-1}$ allows an
application of Bayes' theorem which in turn assures the existence
of an equilibrium distribution $\pi=\mu\nu$ and therefore $Z$ to
be reversible. However for non-singleton statistical models $(X,\,\mathcal{A},\,\mathcal{M})$
and $(Y,\,\mathcal{B},\,\mathcal{N})$ the transition operator, which
is induced by $T$ depends on the choice of $P\in\mathcal{M}$, by
the condition: 
\[
\tau_{P}(P)=T_{*}P
\]
Consequentially also the conditional probability from $X$ to $Y$
which is induced by $P$ depends on this choice, such that:

\begin{equation}
\mathrm{p}_{P}(y\mid x)\coloneqq\frac{\mathrm{d}\tau_{P}(\delta_{x})}{\mathrm{d}\tau_{P}(P)}(y),\,\forall(x,\,y)\in X\times Y\label{eq:conditional-probability}
\end{equation}
As the Markov process, however is completely determined by the underlying
conditional probabilities, the existence of a Markov process, that
induces $\mathcal{N}$ from $\mathcal{M}$ requires $\mathrm{p}_{P}(y\mid x)$
and therefore $\tau_{P}$ to be independent of the choice of $P$.
Then there has to exist a transition operator $\tau$ with $\tau(\mathcal{M})=\mathcal{N}$
that induces a fixes conditional probability $\mathrm{p}(y\mid x)$.
The dual conditional probability from $Y$ to $X$, however still
depends on the choice of $P$, by the continuous Bayes Theorem: 
\[
\mathrm{p}_{P}(x\mid y)\coloneqq\frac{P(x)}{\tau(P)(y)}\frac{\mathrm{d}\tau(\delta_{x})}{\mathrm{d}\tau(P)}(y),\,\forall(x,\,y)\in X\times Y
\]
Then the reversibility of the Markov process is precisely satisfied,
if also the dual conditional probability does not depend on the choice
of $P$. In this case also the equilibrium distribution 
\[
\pi_{P}(x,\,y)=P(x)\tau(P)(y)
\]
 does not depend on the choice of $P$ and therefore allows to generalize
the principle of detailed balance to model spaces.
\begin{prop}[\emph{Generalized principle of detailed balance}]
Let $Z$ be a Markov process from $X$ to $Y$, which pushes $\mathcal{M}$
to $\mathcal{N}$. Then $Z$ is reversible, iff there exist conditional
probabilities $\mathrm{p}(y\mid x)$ and $\mathrm{p}(x\mid y)$, such
that for all $P\in\mathcal{M}$ and its respectively induced $Q\in\mathcal{N}$,
it holds that:
\[
\mathrm{p}(x\mid y)Q(y)=\mathrm{p}(y\mid x)P(x),\,\forall(x,\,y)\in X\times Y
\]
\end{prop}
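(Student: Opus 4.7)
The plan is to prove both implications directly from the continuous Bayes' theorem (Theorem \ref{thm:Continuous-Bayes-Theorem}) together with the singleton-pair detailed balance criterion recalled in the paragraphs preceding the proposition. The forward direction amounts to upgrading Bayes' theorem from a statement about a single matched pair $(P,Q)$ to a uniform statement across the model space; the backward direction amounts to reading the uniform identity as producing an equilibrium distribution for every such pair.

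For the forward direction I would assume $Z$ is reversible and argue as follows. Since $Z$ is a Markov process that pushes $\mathcal{M}$ to $\mathcal{N}$, the family of transition operators $\tau_{P}$ determined by $\tau_{P}(P)=T_{*}P$ must collapse to a single $P$-independent $\tau$ with $\tau(\mathcal{M})=\mathcal{N}$; by Lemma \ref{lem:1} and equation \ref{eq:conditional-probability} the forward conditional probabilities $\mathrm{p}_{P}(y\mid x)$ then coincide in a common $\mathrm{p}(y\mid x)$. Applying the same argument to the reversed Markov process $Z^{-1}$, which by reversibility is itself a Markov process pushing $\mathcal{N}$ to $\mathcal{M}$, yields a $P$-independent dual $\mathrm{p}(x\mid y)$. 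With both kernels in hand, the asserted identity is exactly Theorem \ref{thm:Continuous-Bayes-Theorem} applied pointwise to each matched pair $(P,Q)$ with $Q=\tau(P)$.

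For the converse I would assume the identity $\mathrm{p}(x\mid y)Q(y)=\mathrm{p}(y\mid x)P(x)$ and, for each $P\in\mathcal{M}$ with induced $Q\in\mathcal{N}$, define a candidate equilibrium distribution on $(X\times Y,\mathcal{A}\otimes\mathcal{B})$ by $\pi_{P}(x,y):=\mathrm{p}(y\mid x)P(x)$. The hypothesis immediately identifies this with $\mathrm{p}(x\mid y)Q(y)$, so $\pi_{P}$ is simultaneously compatible with the forward and backward Markov kernels; by the classical detailed balance principle quoted before the proposition, this certifies reversibility of $Z$ restricted to the pair $(P,Q)$. Because $\mathrm{p}(y\mid x)$ and $\mathrm{p}(x\mid y)$ were chosen once and for all and the construction of $\pi_{P}$ is functorial in $P$, these pointwise reversibility witnesses assemble into reversibility of $Z$ as a Markov process between the entire model spaces.

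The hardest point, as I see it, is not analytic but conceptual: one must nail down rigorously that the informal statement ``$Z$ pushes $\mathcal{M}$ to $\mathcal{N}$'' really entails the $P$-independence of $\tau_{P}$, so that the forward kernel is a property of the morphism of statistical models rather than of individual distributions. Once this reduction is accepted, the backward kernel is treated symmetrically via $Z^{-1}$, and all remaining analytic content is already packaged in Theorem \ref{thm:Continuous-Bayes-Theorem} and Lemmas \ref{lem:1}--\ref{lem:2}, making the rest essentially bookkeeping.
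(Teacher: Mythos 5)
Your argument matches the paper's own justification almost step for step: the paper states this proposition without a separate proof environment, deriving it in the immediately preceding paragraphs exactly as you do --- $P$-independence of the forward kernel $\mathrm{p}_{P}(y\mid x)$ from the requirement that a single Markov process push $\mathcal{M}$ to $\mathcal{N}$, $P$-independence of the dual kernel as the precise content of reversibility, and the displayed identity from the continuous Bayes' theorem applied to each matched pair $(P,\,Q)$ with the equilibrium distribution $\pi_{P}(x,\,y)=P(x)\tau(P)(y)$. The conceptual point you flag as hardest (that ``pushes $\mathcal{M}$ to $\mathcal{N}$'' forces $\tau_{P}$ to collapse to a single $\tau$) is likewise asserted rather than rigorously established in the paper, so your proposal is at the same level of rigor and takes essentially the same route.
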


The generalized principle of detailed balance can by substantiated
if the respective model spaces $\mathcal{M}$ and $\mathcal{N}$ are
dominated by measures $\mu$ and $\nu$. Then the conditional probabilities
are induced by a transition operator 
\[
\tau:L^{1}(X,\,\mathcal{A},\,\mu)\to L^{1}(X,\,\mathcal{A},\,\nu)
\]
 that satisfies $\tau(\mathcal{M})=\mathcal{N}$. This property facilitates
the notation of \emph{Statistical Morphisms}.
\begin{defn*}[Statistical Morphism]
\label{def:Statistical-morphism}\emph{Let $(X,\,\mathcal{A},\,\mathcal{M})$
and $(Y,\,\mathcal{B},\,\mathcal{N})$ be statistical models and 
\[
T\colon(X,\,\mathcal{A})\to(Y,\,\mathcal{B})
\]
 a measurable function. Then a mapping $f\colon\mathcal{M}\to\mathcal{N}$
is termed a Statistical Morphism, iff there exists a transition operator
\[
\tau:L^{1}(X,\,\mathcal{A},\,\mathcal{M})\rightarrow L^{1}(Y,\,\mathcal{B},\,\mathcal{N})
\]
such that the following diagram commutes:}
\[
\xymatrix{\mathcal{M}\ar@{.>}@/^{1pc}/[rrr]^{f}\ar@{_{(}->}@/^{1pc}/[dr]^{\iota} &  &  & \mathcal{N}\ar@{^{(}->}@/_{1pc}/[dl]_{\iota}\\
 & L^{1}(X,\,\mathcal{A},\,\mathcal{M})\ar@{->}[r]^{\tau} & L^{1}(Y,\,\mathcal{B},\,\mathcal{N})\\
 & (X,\,\mathcal{A})\ar@{->}[r]^{T}\ar@{->}[u]\ar@{.>}@/^{1pc}/[luu] & (Y,\,\mathcal{B})\ar@{->}[u]\ar@{.>}@/_{1pc}/[ruu]
}
\]
\end{defn*}
\begin{lem}[\emph{Existence and Uniqueness}]
\label{lem:5}Let $(X,\,\mathcal{A},\,\mathcal{M})$ and $(Y,\,\mathcal{B},\,\mathcal{N})$
be statistical models and 
\[
T\colon(X,\,\mathcal{A})\to(Y,\,\mathcal{B})
\]
a measurable function with $\mathcal{N}=T_{*}\mathcal{M}$. Then $T$
induces a Statistical Morphism $f\colon\mathcal{M}\to\mathcal{N}$
which is a.e. unique up to the quotient $\mu/f(\mu)$.
\end{lem}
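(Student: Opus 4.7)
The plan is to realise $f$ as the pushforward $P\mapsto T_{*}P$ and to exhibit a transition operator $\tau$ that reproduces this assignment at the level of densities, so that the triangle/square diagram of the definition commutes. First I would observe that the hypothesis $\mathcal{N}=T_{*}\mathcal{M}$ already forces the set-theoretic mapping $f(P)\coloneqq T_{*}P$ to be a well-defined surjection $\mathcal{M}\to\mathcal{N}$; the real content of the lemma is thus the compatibility of $f$ with the $L^{1}$-structure sitting under the two model spaces, together with the stated a.e.\ uniqueness.

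To build $\tau$, I would pick a $\sigma$-finite measure $\mu$ dominating $\mathcal{M}$, set $\nu\coloneqq T_{*}\mu$ (which then dominates $\mathcal{N}$), and define $\tau\colon L^{1}(X,\mathcal{A},\mu)\to L^{1}(Y,\mathcal{B},\nu)$ as the Markov operator characterised by
\[
\int_{B}\tau(g)\,\mathrm{d}\nu=\int_{T^{-1}(B)}g\,\mathrm{d}\mu,\qquad B\in\mathcal{B},\;g\in L^{1}(X,\mathcal{A},\mu).
\]
For a density $g=\mathrm{d}P/\mathrm{d}\mu$ this identity forces $\tau(g)=\mathrm{d}(T_{*}P)/\mathrm{d}\nu$, which by the definition of the regular conditional probability is exactly the integral of $g$ against $\mathrm{p}(y\mid x)$. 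The upper triangle of the diagram then commutes because both routes send $P$ to the $\nu$-density of $T_{*}P$; the lower triangle commutes by the defining identity of $\tau$; and the bottom square commutes because $\tau$ was built directly from $T$ via Lemma~\ref{lem:1} and its continuous refinement in the definition of regular conditional probability.

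For the uniqueness part, suppose $f'$ is a second statistical morphism induced by $T$ and realised by some transition operator $\tau'$. Applied to the density of any $P\in\mathcal{M}$, the displayed integral identity pins $\tau'(g)$ down $\nu$-almost everywhere, so $\tau$ and $\tau'$ agree as elements of $L^{1}(Y,\mathcal{B},\nu)$; hence $f$ and $f'$ differ only by modifications of densities on $\mu$-null sets in the source and on $\nu$-null sets in the target. Because $\nu=T_{*}\mu=f(\mu)$, this is precisely the almost-everywhere ambiguity encoded in the quotient $\mu/f(\mu)$, and it is exactly the null-set ambiguity already exploited in Lemma~\ref{lem:1} and in the a.e.-uniqueness half of Lemma~\ref{lem:2}.

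The step I expect to require the most care is handling the case when $\mathcal{M}$ is not jointly dominated by a single $\sigma$-finite measure. Then $L^{1}(X,\mathcal{A},\mathcal{M})$ must be interpreted family-wise, and $\tau$ has to be shown consistent across different choices of $P$: for every $P\in\mathcal{M}$ Lemma~\ref{lem:2} provides a $P$-a.e.\ unique operator $\tau_{P}$, and the task is to verify that the family $\{\tau_{P}\}_{P\in\mathcal{M}}$ glues into a single operator modulo the quotient $\mu/f(\mu)$. This compatibility of local Radon--Nikodym derivatives is the technical heart of the argument; once past it, existence, commutativity, and uniqueness all reduce to the Radon--Nikodym uniqueness already secured by the preceding lemmas.
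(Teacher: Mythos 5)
Your proposal is correct and follows essentially the same route as the paper: fix a dominating measure $\mu$, set $\nu=T_{*}\mu$, obtain the transition operator $\tau$ with $\tau(\mu)=T_{*}\mu$ (the paper cites Lemma~\ref{lem:2} where you write out its defining integral identity explicitly), restrict to $\mathcal{M}$, check that the image is exactly $\mathcal{N}$ using $\mathcal{N}=T_{*}\mathcal{M}$, and trace the a.e.\ uniqueness back to Radon--Nikodym uniqueness together with the dependence on the choice of $\mu$, i.e.\ the quotient $\mu/f(\mu)$. Your closing remark about the undominated case is a point the paper silently assumes away (it writes ``from $\mathcal{M}\preccurlyeq\mu$ it follows\dots'' without listing domination among the hypotheses), so flagging it is a reasonable refinement rather than a divergence.
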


\begin{proof}
Lemma \ref{lem:2} postulates the existence of a transition operator
$\tau$ with $\tau(\mu)=T_{*}\mu$, which is unique for $\nu$-almost
all $y\in Y$. Furthermore from $\mathcal{M}\preccurlyeq\mu$ it follows,
that \emph{
\[
\mathcal{M}\subseteq L^{1}(X,\,\mathcal{A},\,\mu)
\]
}and from $\mathcal{N}\preccurlyeq\nu$ that $T_{*}\mu\preccurlyeq\nu$.
Since however $\mathcal{N}=T_{*}\mathcal{M}$ it also follows, that:
\[
\mathcal{N}\subseteq L^{1}(Y,\,\mathcal{B},\,T_{*}\mu)
\]
Then for any $P\in\mathcal{M}$ it holds, that $\tau(P)\in\mathcal{N}$,
and therefore that $\mathrm{img}\tau\mid_{\mathcal{M}}\subseteq\mathcal{N}$.
Conversely for any $Q\in\mathcal{N}$ there exists an $P\in\mathcal{M}$
with $Q=\tau(P)$ such that also $\mathcal{N}\subseteq\mathrm{img}\tau\mid_{\mathcal{M}}$.
This proves that $\mathcal{N}=\mathrm{img}\tau\mid_{\mathcal{M}}$
and thereupon that $f\coloneqq\tau\mid_{\mathcal{M}}$ yields a Statistical
Morphism which is unique for $\nu$-almost all $y\in Y$. As $f$
in turn depends on the choice of $\mu$ it unique up to the quotient
$\mu/f(\mu)$ and therefore essentially a.e. unique. 
\end{proof}
Since transition operators are bounded linear operators with regard
to the underlying $L^{1}$-spaces, they induce a proximity structure
within the model spaces, which is determined by its image and its
kernel. Thereby the image of the transition operator is uniquely given
by: 
\[
\mathrm{img}(\tau)=L^{1}(Y,\,\mathcal{B},\,T_{*}\mu)
\]
and its kernel by $\mathrm{ker}(\tau)=\tau^{-1}(0_{\mathcal{B}})$,
where: 
\[
0_{\mathcal{B}}\coloneqq\{\nu\in\mathrm{img}(\tau)\mid\nu(B)=0,\,\forall B\in\mathcal{B}\}
\]
With respect to a to a Statistical Morphism $f$ however, the situation
is slightly more sophisticated, as the model spaces do not provide
a canonical vector space structure. Due to the requirement of commutativity
with respect to a measurable function $T$ however the image is given
by $\mathrm{img}(f)=T_{*}\mathcal{M}$. Furthermore the kernel of
$f$ corresponds to a partition of $\mathcal{M}$, which is induced
by the subspace topology from $L^{1}$. Thereby $P,\,Q\in\mathcal{M}$
are \emph{$L^{1}$-identical }over $\mathcal{A}$ and denoted by $P\stackrel{\mathcal{A}}{=}Q$
or by the equivalence class $P\in\mathrm{id}_{\mathcal{A}}(Q)$, iff:
\[
P(A)=Q(A),\,\forall A\in\mathcal{A}
\]
Furthermore for any $Q\in\mathrm{img}(f)$ the kernel of $f$ at $Q$
is given by the preimage 
\[
\mathrm{ker}_{Q}(f)=f^{-1}(\mathrm{id}_{\mathcal{B}}(Q))
\]

\begin{defn*}[Statistical Epi-/Mono-/Isomorphism]
\label{def:Statistical-morphisms}\emph{Let $f\colon\mathcal{M}\to\mathcal{N}$
be a Statistical Morphism which is induced by: 
\[
T\colon(X,\,\mathcal{A})\to(Y,\,\mathcal{B})
\]
 Then a $f$ is termed a Statistical Monomorphism, iff it satisfies:
\[
\mathrm{ker}_{f(P)}(f)\subseteq\mathrm{id}_{\mathcal{A}}(P),\,P\in\mathcal{M}
\]
 In this case it follows, that:
\[
f(P)\stackrel{\mathcal{B}}{=}f(Q)\Rightarrow P\stackrel{\mathcal{A}}{=}Q,\,\forall P\in\mathcal{M},\,\forall Q\in\mathcal{M}
\]
Furthermore $f$ is termed a Statistical Epimorphism iff for all $Q\in\mathcal{N}$,
there exists an $P\in\mathcal{M}$, such that $f(P)\stackrel{\mathcal{B}}{=}Q$
and $\mathrm{img}(f)\stackrel{\mathcal{B}}{=}\mathcal{N}$. Finally
$f$ is termed a Statistical Isomorphism, iff it is a Statistical
Monomorphism and a Statistical Epimorphism.}
\end{defn*}
\begin{lem}
\label{lem:4.4-2}Let $(X,\,\mathcal{A},\,\mathcal{M})$ and $(Y,\,\mathcal{B},\,\mathcal{N})$
be statistical models and $f\colon\mathcal{M}\to\mathcal{N}$ be a
Statistical Morphism, then $f$ is a Statistical Isomorphism, iff
there exists a further Statistical Morphism $f^{*}\colon\mathcal{N}\to\mathcal{M}$,
such that (i) 
\[
(f^{*}\circ f)(P)\stackrel{\mathcal{A}}{=}P,\,\forall P\in\mathcal{M}
\]
and (ii) 
\[
(f\circ f^{*})(Q)\stackrel{\mathcal{B}}{=}Q,\,\forall Q\in\mathcal{N}
\]
\end{lem}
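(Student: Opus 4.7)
My approach is to prove both implications separately, exploiting the $L^1$-linearity of the underlying transition operators. The sufficiency direction (existence of $f^*$ forces $f$ to be a Statistical Isomorphism) reduces to a composition argument, whereas the necessity direction (isomorphism forces a dual morphism) rests essentially on the dual conditional probability from Lemma \ref{lem:2} together with the existence result of Lemma \ref{lem:5}.

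For the $\Leftarrow$ implication, I would assume $f^*$ with properties (i) and (ii) is given and verify both the Mono and Epi conditions. Since Statistical Morphisms are restrictions of bounded linear transition operators on $L^1$, they respect $L^1$-identity; hence $f(P) \stackrel{\mathcal{B}}{=} f(Q)$ entails $f^*(f(P)) \stackrel{\mathcal{A}}{=} f^*(f(Q))$, and collapsing both sides via (i) yields $P \stackrel{\mathcal{A}}{=} Q$, which is precisely the Monomorphism condition. For the Epimorphism condition, given $Q \in \mathcal{N}$ I would set $P \coloneqq f^*(Q) \in \mathcal{M}$; property (ii) then ensures $f(P) \stackrel{\mathcal{B}}{=} Q$, and in particular $\mathrm{img}(f) \stackrel{\mathcal{B}}{=} \mathcal{N}$.

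For the $\Rightarrow$ implication, I would construct $f^*$ from the dual transition operator supplied by Lemma \ref{lem:2}. Fixing a dominating measure $\mu$ for $\mathcal{M}$ and setting $\nu \coloneqq T_* \mu$, Lemma \ref{lem:2} yields a dual transition operator $\tau^* \colon L^1(Y,\mathcal{B},\nu) \to L^1(X,\mathcal{A},\mu)$ satisfying $\tau^*(\nu) \stackrel{\mathcal{A}}{=} \mu$. The Epimorphism condition guarantees a preimage $P \in \mathcal{M}$ under $f$ for every $Q \in \mathcal{N}$, while the Monomorphism condition pins that preimage down up to $\mathcal{A}$-identity; defining $f^*(Q) \coloneqq \tau^*(Q)$ therefore yields a well-defined map $\mathcal{N} \to \mathcal{M}$ modulo $\mathcal{A}$-identity. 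Invoking Lemma \ref{lem:5} applied to the dual pushforward relation then promotes $f^*$ to a genuine Statistical Morphism, and (i), (ii) follow directly because $\tau^* \circ \tau$ and $\tau \circ \tau^*$ act as identities a.e.\ by the construction underlying Lemma \ref{lem:2}.

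The step I expect to be the main obstacle is verifying that $f^*$ actually populates the full commutative diagram defining a Statistical Morphism, since Lemma \ref{lem:2} only produces the operator-level dual $\tau^*$ and not a pointwise measurable function $T^* \colon (Y,\mathcal{B}) \to (X,\mathcal{A})$. I would resolve this by appealing to the reversibility of the Markov process $Z = \{T_X, T_Y\}$ discussed prior to the definition of Statistical Morphism: the generalized principle of detailed balance supplies equilibrium distributions $\pi(x,y) = P(x)\tau(P)(y)$ whose marginal symmetry is precisely what lets $\tau^*$ play the role of the morphism-defining operator on the dual side, circumventing the need for a literal inverse of $T$.
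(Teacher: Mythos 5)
Your ``$\Longleftarrow$'' direction is correct and is essentially the paper's argument: the paper proves the monomorphism property contrapositively (assuming $P\notin\mathrm{id}_{\mathcal{A}}(Q)$ but $f(P)\in\mathrm{id}_{\mathcal{B}}(f(Q))$ and deriving a contradiction from (i)), while you argue directly, but both rest on the same observation that $f^{*}$, being the restriction of a transition operator, respects $L^{1}$-identity, and both obtain the epimorphism property by setting $P\coloneqq f^{*}(Q)$ and applying (ii).

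Your ``$\Longrightarrow$'' direction, however, departs from the paper and contains a genuine gap. The paper defines $f^{*}(Q)\coloneqq\mathrm{ker}_{Q}(f)=f^{-1}(\mathrm{id}_{\mathcal{B}}(Q))$, i.e.\ it chooses a complete system of representatives of the kernel classes, so that (i) and (ii) hold essentially by the definitions of monomorphism and epimorphism. You instead set $f^{*}(Q)\coloneqq\tau^{*}(Q)$, where $\tau^{*}$ is the dual transition operator of Lemma \ref{lem:2} built from one fixed dominating measure $\mu$ and $\nu=T_{*}\mu$. The problem is your claim that ``(i), (ii) follow directly because $\tau^{*}\circ\tau$ and $\tau\circ\tau^{*}$ act as identities a.e.\ by the construction underlying Lemma \ref{lem:2}.'' Lemma \ref{lem:2} only guarantees $\tau^{*}(\nu)=\mu$ for the single pair $(\mu,\nu)$; it gives no control over $\tau^{*}(\tau(P))$ for the other $P\in\mathcal{M}$. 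The statement that $\tau^{*}\circ\tau$ fixes all of $\mathcal{M}$ up to $\mathcal{A}$-identity is exactly the $P$-independence of the dual conditional probabilities, i.e.\ sufficiency of $T$ for $\mathcal{M}$ --- the nontrivial content that the isomorphism hypothesis must be shown to deliver, and which the paper only establishes later (Lemma \ref{lem:3.6-1}) by routing through Proposition \ref{prop:7}, which itself depends on the present lemma. Your proposed fix for the ``main obstacle'' has the same circularity: invoking the generalized principle of detailed balance presupposes that the Markov process is reversible, which is again equivalent to the $P$-independence you have not yet established. To close the argument along your route you would need to show explicitly how the monomorphism condition $\mathrm{ker}_{f(P)}(f)\subseteq\mathrm{id}_{\mathcal{A}}(P)$ forces $\tau^{*}(\tau(P))\stackrel{\mathcal{A}}{=}P$ for every $P\in\mathcal{M}$, or else fall back on the paper's representative-choice construction.
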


\begin{proof}
``$\Longrightarrow$'' Let $f$ be a statistical isomorphism, then
a further Statistical Morphism $f^{*}\colon\mathcal{N}\to\mathcal{M}$
is given by $f^{*}(Q)\coloneqq\mathrm{ker}_{Q}(f)$, which provides
a complete system of representatives of the kernel, such that 
\[
(f^{*}\circ f)(P)\stackrel{\mathcal{A}}{=}P,\,\forall P\in\mathcal{M}
\]
 and 
\[
(f\circ f^{*})(Q)\stackrel{\mathcal{B}}{=}Q,\,\forall Q\in\mathcal{N}
\]
``$\Longleftarrow$'' Conversely if there exists any Statistical
Morphism $f^{*}\colon\mathcal{N}\to\mathcal{M}$ such that $f$ and
$f^{*}$ satisfy (i) and (ii), then for any $P,\,Q\in\mathcal{M}$
with $P\notin\mathrm{id}_{\mathcal{A}}(Q)$ it follows that:
\[
f(P)\notin\mathrm{id}_{\mathcal{B}}(f(Q))
\]
Otherwise let $P\notin\mathrm{id}_{\mathcal{A}}(Q)$ with $f(P)\in\mathrm{id}_{\mathcal{B}}(f(Q))$,
then it would follow, that also 
\[
(f^{*}\circ f)(P)\stackrel{\mathcal{A}}{=}(f^{*}\circ f)(Q)
\]
However since $(f^{*}\circ f)(P)\stackrel{\mathcal{A}}{=}P$ and $(f^{*}\circ f)(Q)\stackrel{\mathcal{A}}{=}Q$
it would also follow, that $P\notin\mathrm{id}_{\mathcal{A}}(Q)$
which contradicts to the initial assumption $P\notin\mathrm{id}_{\mathcal{A}}(Q)$
$\lightning$. This proves, that $f$ is a statistical monomorphism.
Furthermore let $Q\in\mathcal{N}$ and $P=f^{*}(Q)$. Then $f(P)=(f\circ f^{*})(Q)$
and therefore $f(P)\stackrel{\mathcal{B}}{=}Q$. This proves, that
$f$ is a Statistical Epimorphism and therefore a statistical isomorphism.
\end{proof}
\begin{prop}
\label{prop:7}Let $(X,\,\mathcal{A},\,\mathcal{M})$ and $(Y,\,\mathcal{B},\,\mathcal{N})$
be statistical models with $\mathcal{M}\preccurlyeq\mu$ and $\mathcal{N}\preccurlyeq\nu$,
then $(X,\,\mathcal{A},\,\mathcal{M})$ and $(Y,\,\mathcal{B},\,\mathcal{N})$
are statistical equivalent, iff there exists a statistical isomorphism
$f\colon\mathcal{M}\to\mathcal{N}$.
\end{prop}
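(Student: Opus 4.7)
The plan is to prove both implications by translating back and forth between the reversibility of the Markov process (encoded via the generalized principle of detailed balance) and the existence of a two-sided inverse morphism (Lemma \ref{lem:4.4-2}). The dominating measures $\mu$ and $\nu$ are essential because they allow the conditional probabilities to be represented as transition operators on the $L^{1}$-spaces, which is the setting in which Statistical Morphisms are defined.

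For the direction ``$\Rightarrow$'': Assume $(X,\mathcal{A},\mathcal{M})$ and $(Y,\mathcal{B},\mathcal{N})$ are statistically equivalent. Then there exists a reversible Markov process $Z=\{T_{X},T_{Y}\}$ pushing $\mathcal{M}$ to $\mathcal{N}$. First I would invoke the generalized principle of detailed balance to extract conditional probabilities $\mathrm{p}(y\mid x)$ and $\mathrm{p}(x\mid y)$ that are independent of the choice of $P\in\mathcal{M}$. Using the measurable function $T=T_{Y}\circ T_{X}^{-1}$ and the dominations $\mathcal{M}\preccurlyeq\mu$, $\mathcal{N}\preccurlyeq\nu$, these conditional probabilities determine a transition operator $\tau\colon L^{1}(X,\mathcal{A},\mu)\to L^{1}(Y,\mathcal{B},\nu)$ with $\tau(\mathcal{M})=\mathcal{N}$, and by reversibility a dual operator $\tau^{*}\colon L^{1}(Y,\mathcal{B},\nu)\to L^{1}(X,\mathcal{A},\mu)$ with $\tau^{*}(\mathcal{N})=\mathcal{M}$. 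Set $f:=\tau\mid_{\mathcal{M}}$ and $f^{*}:=\tau^{*}\mid_{\mathcal{N}}$; by Lemma \ref{lem:5} both are Statistical Morphisms. Detailed balance forces $f^{*}\circ f$ and $f\circ f^{*}$ to act as the identity modulo $L^{1}$-identity, so Lemma \ref{lem:4.4-2} concludes that $f$ is a Statistical Isomorphism.

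For the direction ``$\Leftarrow$'': Assume a Statistical Isomorphism $f\colon\mathcal{M}\to\mathcal{N}$ exists, induced by a measurable $T\colon(X,\mathcal{A})\to(Y,\mathcal{B})$. Lemma \ref{lem:4.4-2} then supplies a Statistical Morphism $f^{*}\colon\mathcal{N}\to\mathcal{M}$ with $(f^{*}\circ f)(P)\stackrel{\mathcal{A}}{=}P$ and $(f\circ f^{*})(Q)\stackrel{\mathcal{B}}{=}Q$. The morphisms $f$ and $f^{*}$ extend to transition operators $\tau$ and $\tau^{*}$ on the respective $L^{1}$-spaces, and Definition of regular conditional probability yields fixed $\mathrm{p}(y\mid x)$ and $\mathrm{p}(x\mid y)$ independent of the chosen $P\in\mathcal{M}$. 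I would then verify the generalized detailed balance relation $\mathrm{p}(x\mid y)Q(y)=\mathrm{p}(y\mid x)P(x)$ by combining the continuous Bayes' theorem (Theorem \ref{thm:Continuous-Bayes-Theorem}) with the identities $\tau(P)\stackrel{\mathcal{B}}{=}Q$ and $\tau^{*}(Q)\stackrel{\mathcal{A}}{=}P$. Hence the Markov process $Z=\{T_{X},T_{Y}\}$ is reversible, and $(X,\mathcal{A},\mathcal{M})$ and $(Y,\mathcal{B},\mathcal{N})$ are statistically equivalent.

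The main obstacle will be the step in the forward direction where one must argue that reversibility of $Z$, defined as a property of \emph{each} pair $(P,Q)$ with $Q$ induced from $P$, actually forces the conditional probabilities to be \emph{jointly} independent of $P$ — so that a single operator $\tau$ (rather than a family $\tau_{P}$) yields the morphism. This is precisely what the generalized principle of detailed balance encodes, but care is needed when translating between the $\pi_{P}$-a.e. uniqueness of Lemma \ref{lem:2} (tied to a single $P$) and the uniform statement required to produce a bona fide Statistical Morphism on the full model space. Once this is handled, the remainder is a bookkeeping exercise applying Lemma \ref{lem:4.4-2} and Lemma \ref{lem:5}.
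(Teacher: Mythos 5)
Your proposal is correct and follows essentially the same route as the paper's own proof: both directions translate between reversibility of the Markov process (via the generalized principle of detailed balance) and the existence of a two-sided inverse morphism, using Lemma \ref{lem:5} to produce the transition operators and Lemma \ref{lem:4.4-2} to conclude. Your closing remark about the gap between per-pair reversibility and the uniform independence of the conditional probabilities from $P$ identifies a subtlety the paper itself passes over without comment.
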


\begin{proof}
``$\Longrightarrow$'' Let $(X,\,\mathcal{A},\,\mathcal{M})$ and
$(Y,\,\mathcal{B},\,\mathcal{N})$ be statistical equivalent, then
there exists a Markov Process $Z$ from $(X,\,\mathcal{A})$ to $(Y,\,\mathcal{B})$,
that induces $\mathcal{N}$ from $\mathcal{M}$ and therefore a measurable
function 
\[
T\colon(X,\,\mathcal{A})\to(Y,\,\mathcal{B})
\]
 with $\mathcal{N}=T_{*}\mathcal{M}$. Then due to lemma \ref{lem:5}
$T$ induces a Statistical Morphism $f\colon\mathcal{M}\to\mathcal{N}$
and therefore a transition operator 
\[
\tau\colon L^{1}(X,\,\mathcal{A},\,\mu)\to L^{1}(Y,\,\mathcal{B},\,\nu)
\]
 with $\tau(\mathcal{M})=\mathcal{N}$ such that $f\coloneqq\tau\mid_{\mathcal{M}}$.
Since $Z$ is reversible also $\nu$ induces a Statistical Morphism
$f^{*}\colon\mathcal{N}\to\mathcal{M}$ and therefore a dual transition
operator 
\[
\tau^{*}\colon L^{1}(Y,\,\mathcal{B},\,\nu)\to L^{1}(X,\,\mathcal{A},\,\mu)
\]
 with $\tau^{*}(\mathcal{N})=\mathcal{M}$, such that $f^{*}\coloneqq\tau^{*}\mid_{\mathcal{N}}$.
Thereby $f$ and $f^{*}$ are unique up to the quotients $\mu/f(\mu)$
and $\nu/f^{*}(\nu)$. Furthermore $f$ and $f^{*}$have to satisfy
the generalized principle of detailed balance. Therefore independent
of $\mu/f(\mu)$ and $\nu/f^{*}(\nu)$ it follows, that: 
\[
(f^{*}\circ f)(P)\stackrel{\mathcal{A}}{=}P,\,\forall P\in\mathcal{M}
\]
and conversely that:
\[
(f\circ f^{*})(Q)\stackrel{\mathcal{B}}{=}Q,\,\forall Q\in\mathcal{N}
\]
Due to lemma \ref{lem:4.4-2} it follows, that $f$ is a statistical
isomorphism. ``$\Longleftarrow$'' Let $f$ be a statistical isomorphism.
Then there exists a transition operator $\tau$ with $\tau(\mathcal{M})=\mathcal{N}$,
such that the conditional probabilities given by equation \ref{eq:conditional-probability}
do not depend on the choice of $P\in\mathcal{M}$. Then $\tau$ defines
a Markov process $Z$ from $(X,\,\mathcal{A})$ to $(Y,\,\mathcal{B})$,
that induces $\mathcal{N}$ from $\mathcal{M}$. Since $f$ is also
an isomorphism, the argument mutatis mutandis also applies to a dual
transition operator $\tau^{*}$ such that $Z$ is reversible.
\end{proof}
Proposition \ref{prop:7} shows, that the abstract concept of statistical
equivalence has a very precise interpretation given by Statistical
Isomophisms. This motivates to reconsider statistical models as the
objects of a category.
\begin{defn*}[Category of statistical models]
\label{def:category-of-statistical-models}\emph{The category of
statistical models, denoted by }$\mathbf{Stat}$\emph{, consists of:}
\begin{enumerate}
\item[(1)] \emph{a class of objects }$\mathrm{ob}(\mathbf{Stat})$,\emph{ that
comprises all statistical models}
\item[(2)] \emph{a class of morphisms }$\mathrm{hom}(\mathbf{Stat})$,\emph{
that comprises all statistical morphisms}
\end{enumerate}
\end{defn*}

\section{Sufficiency and Completeness in the Category of Statistical Models}

Of course any new mathematical framework at first has to proof it's
usability with respect to established concepts. In the following it is therefore
shown, that the classical concepts of sufficiency and completeness
have a very narrow meaning in the category of statistical models,
respectively given by statistical monomorphisms and statistical epimorphisms.
For the beginning, it is shown, that that statistical morphisms are
closely related to the properties of its underlying measurable function
$T$.
\begin{prop}
\label{prop:8}Let 
\begin{align*}
(X,\,\mathcal{A},\,\mathcal{M}) & \in\mathrm{ob}(\mathbf{Stat})\\
(Y,\,\mathcal{B},\,\mathcal{N}) & \in\mathrm{ob}(\mathbf{Stat})
\end{align*}
with $\mathcal{M}\preccurlyeq\mu$ and $\mathcal{N}\preccurlyeq\nu$
and let
\[
T\colon(X,\,\mathcal{A})\to(Y,\,\mathcal{B})
\]
be a bimeasurable function with:
\[
\mathcal{N}=T_{*}\mathcal{M}
\]
then $(X,\,\mathcal{A},\,\mathcal{M})$ and $(Y,\,\mathcal{B},\,\mathcal{N})$
are statistical equivalent.
\end{prop}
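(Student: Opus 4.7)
The plan is to reduce the claim to Proposition \ref{prop:7} by explicitly constructing a statistical isomorphism $f\colon\mathcal{M}\to\mathcal{N}$. Since $T$ is bimeasurable with $\mathcal{N}=T_{*}\mathcal{M}$, both $T$ and its inverse $T^{-1}\colon(Y,\,\mathcal{B})\to(X,\,\mathcal{A})$ are measurable, and $(T^{-1})_{*}\mathcal{N}=(T^{-1})_{*}T_{*}\mathcal{M}=\mathcal{M}$. This symmetric setting is precisely what the machinery built so far is designed to exploit.

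First, I would invoke Lemma \ref{lem:5} applied to $T$ to obtain a statistical morphism $f\colon\mathcal{M}\to\mathcal{N}$, induced by a transition operator $\tau$ with $\tau(\mathcal{M})=\mathcal{N}$, unique up to the quotient $\mu/f(\mu)$. Then I would apply Lemma \ref{lem:5} a second time, now to $T^{-1}$ with the dominating measure $\nu$, to obtain a dual statistical morphism $f^{*}\colon\mathcal{N}\to\mathcal{M}$, induced by a transition operator $\tau^{*}$ with $\tau^{*}(\mathcal{N})=\mathcal{M}$, unique up to $\nu/f^{*}(\nu)$.

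Next, I would verify the two identities required by Lemma \ref{lem:4.4-2}. Because $T^{-1}\circ T=\mathrm{id}_{X}$ as measurable functions, functoriality of pushforward yields $(T^{-1})_{*}\circ T_{*}=\mathrm{id}$ on probability measures, and therefore
\[
(f^{*}\circ f)(P)=(T^{-1})_{*}T_{*}P=P,\quad\forall P\in\mathcal{M}
\]
so in particular $(f^{*}\circ f)(P)\stackrel{\mathcal{A}}{=}P$. The symmetric argument using $T\circ T^{-1}=\mathrm{id}_{Y}$ gives $(f\circ f^{*})(Q)\stackrel{\mathcal{B}}{=}Q$ for all $Q\in\mathcal{N}$. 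By Lemma \ref{lem:4.4-2}, $f$ is a statistical isomorphism, and Proposition \ref{prop:7} then delivers statistical equivalence of the two models.

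The only genuinely delicate step is the last one: one must check that the almost-everywhere ambiguities in $f$ and $f^{*}$ (the quotients $\mu/f(\mu)$ and $\nu/f^{*}(\nu)$ from Lemma \ref{lem:5}) do not spoil the compositional identities. This is where bimeasurability does the real work: because $T$ is a genuine measurable bijection on the underlying spaces, the transition operators $\tau$ and $\tau^{*}$ can be chosen so that $\tau^{*}\circ\tau$ acts as the identity on the entire $L^{1}$-space containing $\mathcal{M}$ (not only on $\mathcal{M}$ up to null sets depending on $P$), and hence the composition is unambiguous on the level of $\mathcal{A}$-equivalence classes. Once this consistency is noted, the remainder of the proof is a clean application of the lemmas established above.
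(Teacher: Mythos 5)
Your proposal is correct and follows essentially the same route as the paper: apply Lemma \ref{lem:5} to $T$ and to $T^{-1}$ to obtain $f$ and $f^{*}$, verify the two composition identities via $(T^{-1}\circ T)_{*}=\mathrm{id}$, and conclude with Lemma \ref{lem:4.4-2} and Proposition \ref{prop:7}. Your closing remark on controlling the a.e.\ ambiguities in the quotients $\mu/f(\mu)$ and $\nu/f^{*}(\nu)$ is a point the paper passes over silently, but it does not change the argument.
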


\begin{proof}
Due to lemma \ref{lem:5} $T$ induces a Statistical Morphism $f\colon\mathcal{M}\to\mathcal{N}$
with 
\[
f(P)=T_{*}P,\,\forall P\in\mathcal{M}
\]
which is a.e. unique up to the quotient $\mu/f(\mu)$. Since $T$
is bimeasurable, there also exists an inverse measurable function
$T^{-1}$ with $T^{-1}\circ T=\mathrm{id}_{\mathcal{A}}$. Then also
$T^{-1}$ induces a Statistical Morphism $f^{*}\colon\mathcal{N}\to\mathcal{M}$
with 
\[
f^{*}(Q)=T_{*}^{-1}Q,\,\forall Q\in\mathcal{N}
\]
which is a.e. unique up to the quotient $\nu/f^{*}(\nu)$. Then it
follows, that:
\[
(f^{*}\circ f)(P)=(T^{-1}\circ T)_{*}(P)\stackrel{\mathcal{A}}{=}P,\,\forall P\in\mathcal{M}
\]
and that: 
\[
(f\circ f^{*})(Q)=(T\circ T^{-1})_{*}(Q)\stackrel{\mathcal{B}}{=}Q,\,\forall Q\in\mathcal{N}
\]
By lemma \ref{lem:4.4-2} it follows, that $f$ is a statistical isomorphism
and therefore by proposition \ref{prop:7} that $(X,\,\mathcal{A},\,\mathcal{M})$
and $(Y,\,\mathcal{B},\,\mathcal{N})$ are statistically equivalent.
\end{proof}
The criterion for statistical equivalence, given by proposition \ref{prop:8}
requires the statistic $T$ to be a bimeasurable function, which is
a very strong assumptions. In the purpose to characterise statistical
equivalence by the underlying measurable function, this requirement
therefore has to be weakened in the sense, to only regard events,
that are crucial to the respectively model spaces. For example if
only very few distinct events are necessary for the determination
of the $L^{1}$-identity, then the measurable function $T$ in particular
only has to preserve the distinction of those events and not of all
events, as in the case of a bimeasurable function. In order to formalize
this approach of ``coarse graining'', the injectivity and surjectivity
of a bimeasurable function are respectively weakened by \emph{sufficiency}
and \emph{completeness.}
\begin{defn*}[Sufficiency]
\label{def:Sufficient-statistic}\emph{Let
\[
(X,\,\mathcal{A},\,\mathcal{M})\in\mathrm{ob}(\mathbf{Stat})
\]
 let $(Y,\,\mathcal{B})$ be a measurable space and
\[
T\colon(X,\,\mathcal{A})\to(Y,\,\mathcal{B})
\]
 a measurable function. Let further be $\mathcal{M}_{S}\subseteq\mathcal{M}$
with $\mathcal{M}_{S}\preccurlyeq\mu$, then $T$ is sufficient for
$\mathcal{M}_{S}$, iff: 
\[
\mathrm{p}_{P}(x\mid y)=\mathrm{p}_{\mu}(x\mid y),\,\forall P\in\mathcal{M}_{S}
\]
}
\end{defn*}
Intuitively a measurable function $T\colon(X,\,\mathcal{A})\to(Y,\,\mathcal{B})$,
that is sufficient for a set of probability distributions $\mathcal{M}_{S}\subseteq\mathcal{M}$,
is ``fine enough'', to preserve the $L^{1}$-identity of $\mathcal{M}_{S}$
within its image $T_{*}\mathcal{M}_{S}$, such that for arbitrary
$P,\,Q\in\mathcal{M}_{S}$ it holds that: 
\[
T_{*}P\stackrel{\mathcal{B}}{=}T_{*}Q\Rightarrow P\stackrel{\mathcal{A}}{=}Q
\]
If $T$ is even sufficient for the whole $L^{1}$-space, such that
\[
\mathcal{M}=L^{1}(X,\,\mathcal{A},\,\mu)
\]
with $\mathcal{M}\preccurlyeq\mu$, then $T$ is a bimeasurable function.
This shows, that sufficiency indeed may be regarded as a generalization
of injectivity and furthermore is closely related to statistical monomorphisms.
\begin{lem}
\label{lem:3.6-1}Let 
\begin{align*}
(X,\,\mathcal{A},\,\mathcal{M}) & \in\mathrm{ob}(\mathbf{Stat})\\
(Y,\,\mathcal{B},\,\mathcal{N}) & \in\mathrm{ob}(\mathbf{Stat})
\end{align*}
with $\mathcal{M}\preccurlyeq\mu$ and $\mathcal{N}\preccurlyeq\nu$
and let
\[
T\colon(X,\,\mathcal{A})\to(Y,\,\mathcal{B})
\]
be a measurable function with $T_{*}\mathcal{M}\stackrel{\mathcal{B}}{=}\mathcal{N}$.
Then $T$ is sufficient for $\mathcal{M}$, iff $T$ induces a statistical
monomorphism $f\colon\mathcal{M}\to\mathcal{N}$.
\end{lem}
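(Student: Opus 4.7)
My plan is to prove both directions of the equivalence by exploiting the continuous Bayes theorem (Thm.~\ref{thm:Continuous-Bayes-Theorem}) together with the disintegration identity that represents every $P\in\mathcal{M}$ as a $\tau(P)$-mixture of dual conditional probabilities. Lem.~\ref{lem:5} already supplies the statistical morphism $f=\tau|_{\mathcal{M}}$; what remains is to tie the injectivity of $f$ to the $P$-independence of the dual conditional $\mathrm{p}_{P}(x\mid y)$.

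\textbf{Forward direction.} First I would assume $T$ is sufficient for $\mathcal{M}$ and pick $P,Q\in\mathcal{M}$ with $f(P)\stackrel{\mathcal{B}}{=}f(Q)$. The disintegration formula combined with sufficiency rewrites
$$P(A)=\int_{Y}\int_{A}\mathrm{p}_{P}(x\mid y)\,d\mu(x)\,d\tau(P)(y)=\int_{Y}\int_{A}\mathrm{p}_{\mu}(x\mid y)\,d\mu(x)\,d\tau(P)(y),$$
and the same identity holds for $Q(A)$ with $\tau(Q)$ in the outer integral. Since the inner integrand is $P$-independent and the two outer measures agree $\mathcal{B}$-a.e., $P(A)=Q(A)$ for every $A\in\mathcal{A}$. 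This establishes $\mathrm{ker}_{f(P)}(f)\subseteq\mathrm{id}_{\mathcal{A}}(P)$, so $f$ is a statistical monomorphism.

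\textbf{Backward direction.} Conversely, assuming $f$ is a monomorphism, I would fix $P\in\mathcal{M}$ and construct the auxiliary probability measure
$$P'(A):=\int_{Y}\int_{A}\mathrm{p}_{\mu}(x\mid y)\,d\mu(x)\,d\tau(P)(y).$$
A Fubini computation using that $\mathrm{p}_{\mu}(\cdot\mid y)\,\mu$ pushes forward to $\delta_{y}$ shows $\tau(P')=T_{*}P'\stackrel{\mathcal{B}}{=}\tau(P)=f(P)$, so $P$ and $P'$ have the same image under $\tau$. The monomorphism property then forces $P'\stackrel{\mathcal{A}}{=}P$, and comparing this identity against the Bayes representation of $P$ with $\mathrm{p}_{P}$ in place of $\mathrm{p}_{\mu}$ yields $\mathrm{p}_{P}(x\mid y)=\mathrm{p}_{\mu}(x\mid y)$ $\nu$-a.e., which is exactly the sufficiency condition.

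\textbf{The hard part.} The main obstacle lies in the backward direction: monomorphism only constrains $\tau$ on $\mathcal{M}$, whereas the reconstructed $P'$ is a priori only an element of $L^{1}(X,\mathcal{A},\mu)$. Bridging this either requires extending the injectivity of $\tau|_{\mathcal{M}}$ to the full $L^{1}$-level (legitimate because $\tau$ is bounded linear and the monomorphism condition transports across $\mathcal{A}$-equivalence classes) or noting that $P'\in\mathcal{M}$ under the standing assumption $T_{*}\mathcal{M}\stackrel{\mathcal{B}}{=}\mathcal{N}$. Alternatively, a contrapositive argument would suffice: if $\mathrm{p}_{P}\neq\mathrm{p}_{\mu}$ on a set of positive $\mu\otimes\nu$-measure, then $P'$ differs from $P$ in $\mathcal{A}$ while $f(P')\stackrel{\mathcal{B}}{=}f(P)$, directly violating $\mathrm{ker}_{f(P)}(f)\subseteq\mathrm{id}_{\mathcal{A}}(P)$. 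I would adopt this last route in the final write-up, since it sidesteps any need to argue about $P'\in\mathcal{M}$ and keeps the proof entirely within the definitions already in force.
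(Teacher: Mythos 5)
Your forward direction is correct and takes a genuinely different, more concrete route than the paper: the paper pushes both implications through the generalized principle of detailed balance and Proposition \ref{prop:7} (sufficiency $\Leftrightarrow$ $P$-independence of the dual conditionals $\Leftrightarrow$ reversibility $\Leftrightarrow$ statistical equivalence of $(X,\mathcal{A},\mathcal{M})$ and $(Y,\mathcal{B},T_{*}\mathcal{M})$ $\Leftrightarrow$ isomorphism onto the image), whereas you argue directly from the disintegration $P(A)=\int_{Y}\int_{A}\mathrm{p}_{P}(x\mid y)\,\mathrm{d}\mu(x)\,\mathrm{d}\tau(P)(y)$. That half is the classical Halmos--Savage-style computation and is, if anything, cleaner than the paper's.

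The backward direction has a genuine gap that none of your three proposed repairs closes. By the paper's own definition, $\mathrm{ker}_{f(P)}(f)=f^{-1}(\mathrm{id}_{\mathcal{B}}(f(P)))$ is a subset of $\mathcal{M}$, so to contradict $\mathrm{ker}_{f(P)}(f)\subseteq\mathrm{id}_{\mathcal{A}}(P)$ you must exhibit a witness \emph{inside} $\mathcal{M}$; your contrapositive therefore does not sidestep the membership question --- if $P'\notin\mathcal{M}$ it simply is not in the kernel and no violation occurs. The other two repairs also fail: injectivity of a bounded linear operator on the non-linear subset $\mathcal{M}$ does not extend to $L^{1}$ (transition operators are essentially conditional expectations and have large kernels), and $T_{*}\mathcal{M}\stackrel{\mathcal{B}}{=}\mathcal{N}$ says nothing about $\mathcal{M}$ containing the mixture $P'$. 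The gap is not cosmetic. Take $\mathcal{M}$ to be the i.i.d.\ Bernoulli$(\theta)$ pairs on $X=\{0,1\}^{2}$ and $T$ the projection onto the first coordinate: $T_{*}P_{\theta}$ is Bernoulli$(\theta)$, so $f$ is injective on $\mathcal{M}$ and hence a statistical monomorphism, yet the conditional law of the second coordinate given $T$ depends on $\theta$, so $T$ is not sufficient; and the reconstructed $P'$ (first coordinate Bernoulli$(\theta)$, second coordinate drawn from the reference conditional) lies outside $\mathcal{M}$, exactly as you feared. Any proof of this direction must therefore invoke something beyond injectivity of $f$ on $\mathcal{M}$ --- the paper at this point leans on the reversibility assertion built into Proposition \ref{prop:7} and the generalized detailed-balance principle rather than on a disintegration argument.
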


\begin{proof}
Let $\mathcal{N}_{C}=T_{*}\mathcal{M}\subseteq\mathcal{N}$ then $\mathcal{N}_{C}\preccurlyeq\nu$.
Due to lemma \ref{lem:5} $T$ induces a Statistical Morphism $f\colon\mathcal{M}\to\mathcal{N}_{C}$
and therefore a transition operator 
\[
\tau\colon L^{1}(X,\,\mathcal{A},\,\mathcal{M})\to L^{1}(Y,\,\mathcal{B},\,\mathcal{N}_{C})
\]
with $\tau(\mathcal{M})=\mathcal{N}_{C}$, which is a.e. unique up
to $\mu/\tau(\mu)$. In particular the $\mathrm{p}_{P}(y\mid x)$
do not depend on the choice of $P\in\mathcal{M}$. ``$\Longrightarrow$''
Let now be $T$ sufficient for $\mathcal{M}$, then by definition
also the induced dual conditional probabilities $\mathrm{p}_{P}(x\mid y)$
do not depend on the choice of $P\in\mathcal{M}$. Due to the generalized
principle of detailed balance there exists a reversible Markov process
from $X$ to $Y$, that pushes $\mathcal{M}$ to $\mathcal{N}_{C}$,
such that $(X,\,\mathcal{A},\,\mathcal{M})$ and $(Y,\,\mathcal{B},\,\mathcal{N}_{C})$
are statistical equivalent. Then by proposition \ref{prop:7} it follows,
that $f\mid_{\mathcal{N}_{C}}$ is a statistical isomorphism and $f$
is a statistical monomorphism. ``$\Longleftarrow$'' Let $f$ be
a statistical monomorphism, then $f\mid_{\mathcal{N}_{C}}$ is a statistical
isomorphism and proposition \ref{prop:7} postulates that $(X,\,\mathcal{A},\,\mathcal{M})$
and $(Y,\,\mathcal{B},\,\mathcal{N}_{C})$ are statistical equivalent
such that there exists a reversible Markov process from $X$ to $Y$,
that pushes $\mathcal{M}$ to $\mathcal{N}_{C}$. Then the generalized
principle of detailed balance postulates, that the $\mathrm{p}_{P}(x\mid y)$
do not depend on the choice of $P\in\mathcal{M}$, such that $T$
is sufficient for $\mathcal{M}$.
\end{proof}
Conversely also the question arises, if a measurable function $T$
is ``fine enough'', to generate the $L^{1}$-identity over its codomain.
Let therefore $(X,\,\mathcal{A})$ be a measurable space, $(Y,\,\mathcal{B},\,\mathcal{N})$
a statistical model with $\mathcal{N}\preccurlyeq\nu$ and 
\[
T\colon(X,\,\mathcal{A})\to(Y,\,\mathcal{B})
\]
a measurable function. Then for any $\rho\in L^{1}(Y,\,\mathcal{B},\,\nu)$
an approximation of $\rho$ is given by the conditional expectation
$\mathrm{E}_{\nu}(\rho\mid T(\mathcal{A}))$ and $T$ generates the
$L^{1}$-identity over $(Y,\,\mathcal{B},\,\mathcal{N})$, if for
any 
\[
\rho\in L^{1}(Y,\,\mathcal{B},\,\nu)
\]
 with 
\[
\mathrm{E}_{\nu}(\rho\mid T(\mathcal{A}))\stackrel{\mathcal{B}}{=}0_{\mathcal{B}}
\]
 it follows, that $\rho\stackrel{\mathcal{B}}{=}0_{\mathcal{B}}$.
This provides the notation of completeness.
\begin{defn*}[Completeness]
\label{def:Complete-statistic}\emph{Let $(X,\,\mathcal{A})$ be
a measurable space, $(Y,\,\mathcal{B},\,\mathcal{N})\in\mathrm{ob}(\mathbf{Stat})$
and 
\[
T\colon(X,\,\mathcal{A})\to(Y,\,\mathcal{B})
\]
 a measurable function. Let further be $\mathcal{N}_{C}\subseteq\mathcal{N}$
with $\mathcal{N}_{C}\preccurlyeq\nu$, then $T$ is termed complete
for $\mathcal{N}_{C}$, iff for all 
\[
\rho\in L^{1}(Y,\,\mathcal{B},\,\nu)
\]
 with 
\[
\mathrm{E}_{\nu}(\rho\mid T(\mathcal{A}))\stackrel{\mathcal{B}}{=}0_{\mathcal{B}}
\]
 it follows, that $\rho\stackrel{\mathcal{B}}{=}0_{\mathcal{B}}$.}
\end{defn*}
In particular if a measurable function $T$ is complete for $\mathcal{N}_{C}$,
then for arbitrary $P,\,Q\in\mathcal{N}_{C}$ with $P\notin\mathrm{id}_{\mathcal{B}}(Q)$
there exists an $A\in\mathcal{A}$, such that: 
\[
P(T(A))\ne Q(T(A))
\]
By assuming an underlying statistical model $(X,\,\mathcal{A},\,\mathcal{M})$
and $\mathcal{N}$ to be induced by $T$, such that $\mathcal{N}=T_{*}\mathcal{M}$,
then the condition may also be pulled back to $\mathcal{M}$. Then
$T$ is complete for $\mathcal{N}_{C}$, if for arbitrary for $P,\,Q\in\mathcal{N}_{C}$
with $P\notin\mathrm{id}_{\mathcal{B}}(Q)$ there exists an $A\in\mathcal{A}$,
such that 
\[
(T^{*}P)(A)\ne(T^{*}Q)(A)
\]
This however is equivalent to the claim, that $T$ generates the $L^{1}$-identity
over $\mathcal{N}_{C}$, such that for all $Q\in\mathcal{N}_{C}$,
there exists an $P\in\mathcal{M}$, such that $T_{*}P\stackrel{\mathcal{B}}{=}Q$.
This shows, that completeness generalizes surjectivity and furthermore
is closely related to Statistical Epimorphisms.
\begin{lem}
\label{lem:3.6-1-1}Let 
\begin{align*}
(X,\,\mathcal{A},\,\mathcal{M}) & \in\mathrm{ob}(\mathbf{Stat})\\
(Y,\,\mathcal{B},\,\mathcal{N}) & \in\mathrm{ob}(\mathbf{Stat})
\end{align*}
with $\mathcal{M}\preccurlyeq\mu$ and $\mathcal{N}\preccurlyeq\nu$
and 
\[
T\colon(X,\,\mathcal{A})\to(Y,\,\mathcal{B})
\]
 a measurable function with $T_{*}\mathcal{M}\stackrel{\mathcal{B}}{=}\mathcal{N}$.
Then $T$ is complete for $\mathcal{N}$, iff $T$ induces a Statistical
Epimorphism $f\colon\mathcal{M}\to\mathcal{N}$.
\end{lem}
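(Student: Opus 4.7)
The plan is to exploit the reformulation of completeness sketched in the paragraph immediately preceding the statement: $T$ being complete for $\mathcal{N}$ is equivalent to the requirement that every $Q\in\mathcal{N}$ be $\mathcal{B}$-identical to some pushforward $T_{*}P$ with $P\in\mathcal{M}$, which is precisely the surjectivity-style clause of the Statistical Epimorphism definition. The proof should therefore parallel the structure of lemma \ref{lem:3.6-1}: first apply lemma \ref{lem:5} to obtain the induced Statistical Morphism $f\colon\mathcal{M}\to\mathcal{N}_{C}$ with $\mathcal{N}_{C}\coloneqq T_{*}\mathcal{M}$ and $f(P)\stackrel{\mathcal{B}}{=}T_{*}P$, noting that the hypothesis $T_{*}\mathcal{M}\stackrel{\mathcal{B}}{=}\mathcal{N}$ already delivers $\mathrm{img}(f)\stackrel{\mathcal{B}}{=}\mathcal{N}$, so the entire content of the lemma reduces to aligning completeness with the remaining ``for all $Q$ there exists $P$'' clause of the epimorphism definition.

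For the forward implication I would argue by contraposition: if some $Q\in\mathcal{N}$ failed to be $\mathcal{B}$-identical to any $f(P)$, then for a witness $P_{0}\in\mathcal{M}$ supplied by the hypothesis on $T_{*}\mathcal{M}$, the density $\rho\coloneqq\mathrm{d}Q/\mathrm{d}\nu-\mathrm{d}(T_{*}P_{0})/\mathrm{d}\nu$ is a nonzero element of $L^{1}(Y,\mathcal{B},\nu)$ whose $T(\mathcal{A})$-integrals vanish by the $T(\mathcal{A})$-coincidence of $Q$ and $T_{*}P_{0}$, yielding $\mathrm{E}_{\nu}(\rho\mid T(\mathcal{A}))\stackrel{\mathcal{B}}{=}0_{\mathcal{B}}$ against $\rho\stackrel{\mathcal{B}}{\ne}0_{\mathcal{B}}$, contradicting completeness. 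For the converse I would take $\rho\in L^{1}(Y,\mathcal{B},\nu)$ with $\mathrm{E}_{\nu}(\rho\mid T(\mathcal{A}))\stackrel{\mathcal{B}}{=}0_{\mathcal{B}}$, Hahn--Jordan-decompose and rescale it into a pair of $\nu$-dominated probability measures that the hypothesis identifies with members $Q_{\pm}\in\mathcal{N}$, lift them via the epimorphism to $P_{\pm}\in\mathcal{M}$ with $f(P_{\pm})\stackrel{\mathcal{B}}{=}Q_{\pm}$, and use the $T(\mathcal{A})$-coincidence of $Q_{\pm}$ together with the conditional-probability structure of theorem \ref{thm:Continuous-Bayes-Theorem} to propagate the coincidence to the full $\sigma$-algebra $\mathcal{B}$, forcing $\rho\stackrel{\mathcal{B}}{=}0_{\mathcal{B}}$.

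The main obstacle I anticipate lies in the converse direction: the conditional-expectation hypothesis only sees the sub-$\sigma$-algebra $T(\mathcal{A})$, so promoting $T(\mathcal{A})$-vanishing of $\rho$ to $\mathcal{B}$-wide vanishing requires combining the domination $\mathcal{N}\preccurlyeq\nu$ with the epimorphism-given coverage of $\mathcal{N}$ by $T_{*}\mathcal{M}$ in a way that translates signed-measure data faithfully back into the categorical setting. In particular, identifying the Jordan components of $\rho\,\mathrm{d}\nu$ with genuine members of $\mathcal{N}$ rather than with arbitrary $\nu$-absolutely-continuous measures that might fall outside $\mathcal{N}$ is the delicate technical step, and will likely rest on the richness of $\mathcal{N}$ within $L^{1}(Y,\mathcal{B},\nu)$ to separate densities.
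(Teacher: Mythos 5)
Your first paragraph is, in substance, the paper's entire proof. The paper never works with the conditional-expectation definition of completeness inside this lemma: it simply invokes the reformulation stated in the preceding expository paragraph (completeness for $\mathcal{N}$ means that for every $Q\in\mathcal{N}$ there is a $P\in\mathcal{M}$ with $T_{*}P\stackrel{\mathcal{B}}{=}Q$), combines it with $T_{*}\mathcal{M}\stackrel{\mathcal{B}}{=}\mathcal{N}$ to get $T_{*}\mathcal{M}=\mathcal{N}$, applies lemma \ref{lem:5} to obtain $f$ with $\mathrm{img}(f)=T_{*}\mathcal{M}\stackrel{\mathcal{B}}{=}\mathcal{N}$, and reads off the epimorphism property; the converse is the same identification run backwards. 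So if you treat the reformulation as already established, you have reproduced the intended argument and nothing more is needed.

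The difficulty is in your second and third paragraphs, where you try to actually derive that reformulation from the $\mathrm{E}_{\nu}(\rho\mid T(\mathcal{A}))$ definition --- something the paper itself only asserts. Both directions of your derivation have concrete gaps. In the forward direction, your contrapositive needs a $P_{0}\in\mathcal{M}$ such that $T_{*}P_{0}$ agrees with $Q$ on $T(\mathcal{A})$ but not on all of $\mathcal{B}$; the hypothesis $T_{*}\mathcal{M}\stackrel{\mathcal{B}}{=}\mathcal{N}$ supplies no such intermediate witness (if it supplies any witness at all, it supplies one agreeing on all of $\mathcal{B}$, making the contrapositive vacuous), so the $T(\mathcal{A})$-coincidence you integrate against is unestablished. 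In the converse direction, the Hahn--Jordan components of $\rho\,\mathrm{d}\nu$, once normalised, are merely $\nu$-absolutely-continuous probability measures; there is no reason they lie in $\mathcal{N}$, which is an arbitrary dominated subset of $L^{1}(Y,\mathcal{B},\nu)$, so the epimorphism gives you no lift and the argument stalls exactly at the step you flag as delicate. As written, then, the proposal is correct only insofar as it coincides with the paper's (definitional) proof; the additional bridge from the conditional-expectation definition to the surjectivity statement is not achieved, and indeed would require extra hypotheses on the richness of $\mathcal{N}$ that neither you nor the paper supplies.
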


\begin{proof}
``$\Longrightarrow$'' Since $T_{*}\mathcal{M}\stackrel{\mathcal{B}}{=}\mathcal{N}$
it follows, that $T_{*}\mathcal{M}\subseteq\mathcal{N}$. Since $T$
is complete for $\mathcal{N}$, for all $Q\in\mathcal{N}$ there exists
an $P\in\mathcal{M}$, such that $T_{*}P\stackrel{\mathcal{B}}{=}Q$
and therefore $\mathcal{N}\subseteq T_{*}\mathcal{M}$, such that
$T_{*}\mathcal{M}=\mathcal{N}$. Then due to lemma \ref{lem:5} $T$
induces a Statistical Morphism $f\colon\mathcal{M}\to\mathcal{N}$
with $f(\mathcal{M})=T_{*}\mathcal{M}$ and since 
\[
\mathrm{img}(f)=T_{*}\mathcal{M}\stackrel{\mathcal{B}}{=}\mathcal{N}
\]
 it follows, that $f$ is a Statistical Epimorphism. ``$\Longleftarrow$''
Let $f$ be a Statistical Epimorphism, then $\mathrm{img}(f)\stackrel{\mathcal{B}}{=}\mathcal{N}$
where 
\[
f(\mathcal{M})=T_{*}\mathcal{M}\stackrel{\mathcal{B}}{=}\mathcal{N}
\]
Then for any $Q\in\mathcal{N}$ there exists an $P\in\mathcal{M}$,
such that $T_{*}P\stackrel{\mathcal{B}}{=}Q$ and $T$ is complete
for $\mathcal{N}$.
\end{proof}
The properties of sufficiency for $\mathcal{M}$ and completeness
for $\mathcal{N}$ are still satisfied if they are only required for
the subsets $\bigslant{\mathcal{M}}{\mathrm{id}_{\mathcal{A}}}\subseteq\mathcal{M}$
and $\bigslant{\mathcal{N}}{\mathrm{id}_{\mathcal{B}}}\subseteq\mathcal{N}$,
that generate the respective $L^{1}$-identity. Intuitively this represents
the property that for the statistical equivalence of statistical models
it simply doesn't matter if the model spaces are generated by distribution
assumptions, that are observational indistinguishable, as long as
the distinguishable probability distributions are distinguishable
in both statistical models. This allows to characterise statistical
equivalence by the underlying measurable function.
\begin{thm}
\label{thm:3.4}Let 
\begin{align*}
(X,\,\mathcal{A},\,\mathcal{M}) & \in\mathrm{ob}(\mathbf{Stat})\\
(Y,\,\mathcal{B},\,\mathcal{N}) & \in\mathrm{ob}(\mathbf{Stat})
\end{align*}
with $\mathcal{M}\preccurlyeq\mu$ and $\mathcal{N}\preccurlyeq\nu$.
Then the following statements are equivalent:
\begin{enumerate}
\item[\emph{(i)}] There exists a measurable function 
\[
T\colon(X,\,\mathcal{A})\to(Y,\,\mathcal{B})
\]
with $T_{*}\mathcal{M}\stackrel{\mathcal{B}}{=}\mathcal{N}$, such
that $T$ is sufficient for $\bigslant{\mathcal{M}}{\mathrm{id}_{\mathcal{A}}}$
and $T$ is complete for $\bigslant{\mathcal{N}}{\mathrm{id}_{\mathcal{B}}}$
\item[\emph{(ii)}] $(X,\,\mathcal{A},\,\mathcal{M})$ and $(Y,\,\mathcal{B},\,\mathcal{N})$
are statistical equivalent
\item[\emph{(iii)}] There exists a statistical isomorphism $f\colon\mathcal{M}\to\mathcal{N}$
\end{enumerate}
\end{thm}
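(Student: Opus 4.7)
The plan is to prove the theorem by establishing the triangle \emph{(ii) $\Leftrightarrow$ (iii)} directly from Proposition \ref{prop:7}, and then closing the cycle with \emph{(i) $\Rightarrow$ (iii)} and \emph{(iii) $\Rightarrow$ (i)} using the characterisations of sufficiency (Lemma \ref{lem:3.6-1}) and completeness (Lemma \ref{lem:3.6-1-1}) as statistical mono- and epimorphisms, respectively. Proposition \ref{prop:7} already gives \emph{(ii) $\Leftrightarrow$ (iii)} under the dominating hypotheses $\mathcal{M}\preccurlyeq\mu$ and $\mathcal{N}\preccurlyeq\nu$, so no further work is needed there.

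For \emph{(i) $\Rightarrow$ (iii)}, I would pass to the quotients $\bigslant{\mathcal{M}}{\mathrm{id}_\mathcal{A}}$ and $\bigslant{\mathcal{N}}{\mathrm{id}_\mathcal{B}}$, on which the statistical model structure descends since the $L^1$-identity is exactly the kernel of the embeddings $\mathcal{M}\hookrightarrow L^1(X,\mathcal{A},\mu)$ and $\mathcal{N}\hookrightarrow L^1(Y,\mathcal{B},\nu)$. The assumption $T_*\mathcal{M}\stackrel{\mathcal{B}}{=}\mathcal{N}$ together with Lemma \ref{lem:5} produces a statistical morphism $\bar{f}\colon\bigslant{\mathcal{M}}{\mathrm{id}_\mathcal{A}}\to\bigslant{\mathcal{N}}{\mathrm{id}_\mathcal{B}}$. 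Sufficiency for $\bigslant{\mathcal{M}}{\mathrm{id}_\mathcal{A}}$ makes $\bar{f}$ a statistical monomorphism by Lemma \ref{lem:3.6-1}, and completeness for $\bigslant{\mathcal{N}}{\mathrm{id}_\mathcal{B}}$ makes it a statistical epimorphism by Lemma \ref{lem:3.6-1-1}; hence $\bar{f}$ is a statistical isomorphism by Definition of the isomorphism. Lifting back through the quotient (which only identifies representatives that were already $L^1$-identical) yields the desired statistical isomorphism $f\colon\mathcal{M}\to\mathcal{N}$.

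For \emph{(iii) $\Rightarrow$ (i)}, I start with a statistical isomorphism $f$, which by definition is induced by some measurable function $T\colon(X,\mathcal{A})\to(Y,\mathcal{B})$ and a transition operator $\tau$ with $\tau(\mathcal{M})=\mathcal{N}$; in particular $T_*\mathcal{M}\stackrel{\mathcal{B}}{=}\mathcal{N}$. Since $f$ is simultaneously a statistical monomorphism and a statistical epimorphism, the reverse directions of Lemma \ref{lem:3.6-1} and Lemma \ref{lem:3.6-1-1} give that $T$ is sufficient for $\mathcal{M}$ (hence for the smaller set $\bigslant{\mathcal{M}}{\mathrm{id}_\mathcal{A}}$) and complete for $\mathcal{N}$ (hence for $\bigslant{\mathcal{N}}{\mathrm{id}_\mathcal{B}}$).

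The step I expect to require the most care is the passage to the quotients in \emph{(i) $\Rightarrow$ (iii)}: Lemmas \ref{lem:3.6-1} and \ref{lem:3.6-1-1} were stated for the whole model spaces, so I need to argue that sufficiency/completeness restricted to the quotients is enough to conclude the mono/epi property of the induced morphism on the full spaces — which works because $L^1$-identical representatives are mapped by $f$ to $L^1$-identical images (this is essentially the content of Lemma \ref{lem:5}'s a.e.-uniqueness clause), so the monomorphism and epimorphism conditions of the Statistical Epi-/Mono-/Isomorphism definition only depend on the quotient data. Once this descent is made explicit, everything else is a direct appeal to the lemmas already established.
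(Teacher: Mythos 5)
Your proposal is correct and rests on the same pillars as the paper's proof: Proposition \ref{prop:7} for (ii) $\Leftrightarrow$ (iii), and Lemmas \ref{lem:3.6-1} and \ref{lem:3.6-1-1} to translate sufficiency and completeness into the monomorphism and epimorphism properties. The decomposition differs slightly: the paper proves (i) $\Rightarrow$ (ii) directly and then disposes of the remaining directions by citing Proposition \ref{prop:7}, whereas you close the cycle through (i) $\Leftrightarrow$ (iii). Two points of comparison are worth noting. First, for the passage from sufficiency/completeness on $\bigslant{\mathcal{M}}{\mathrm{id}_{\mathcal{A}}}$ and $\bigslant{\mathcal{N}}{\mathrm{id}_{\mathcal{B}}}$ to the full model spaces, the paper simply invokes the remark preceding the theorem (that these quotients generate the respective $L^{1}$-identities, so the properties extend), while you make the descent-and-lift argument explicit; this is the more careful treatment of what both texts agree is the delicate step. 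Second, your (iii) $\Rightarrow$ (i) direction — extracting the underlying measurable $T$ from the isomorphism and applying the converse halves of Lemmas \ref{lem:3.6-1} and \ref{lem:3.6-1-1} to recover sufficiency and completeness — supplies an argument the paper's proof actually leaves incomplete: its ``(ii) $\Rightarrow$ (i)'' paragraph stops after producing the statistical isomorphism and never verifies that the associated $T$ has the properties demanded in (i). So your route is not merely equivalent but fills a genuine gap in the published argument.
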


\begin{proof}
``$(\text{i})\Rightarrow(\text{ii})$'' Since $T$ is sufficient
for $\bigslant{\mathcal{M}}{\mathrm{id}_{\mathcal{A}}}$, it is also
sufficient for $\mathcal{M}$ and therefore by lemma \ref{lem:3.6-1}
it follows, that $T$ induces a statistical monomorphism $f\colon\mathcal{M}\to\mathcal{N}$.
Furthermore since $T$ is complete for $\bigslant{\mathcal{N}}{\mathrm{id}_{\mathcal{B}}}$,
it is also complete for $\mathcal{N}$ and therefore by lemma \ref{lem:3.6-1-1}
and the a.e. uniqueness of $f$ it follows, that $f$ is also a Statistical
Epimorphism and therefore a statistical isomorphism. Then due to proposition \ref{prop:7}
$(X,\,\mathcal{A},\,\mathcal{M})$ and $(Y,\,\mathcal{B},\,\mathcal{N})$
are statistical equivalent. ``$(\text{ii})\Rightarrow(\text{i})$''
Let $(X,\,\mathcal{A},\,\mathcal{M})$ and $(Y,\,\mathcal{B},\,\mathcal{N})$
be statistical equivalent, then proposition \ref{prop:7} postulates the
existence of a statistical isomorphism $f\colon\mathcal{M}\to\mathcal{N}$.
``$(\text{ii})\Leftrightarrow(\text{iii})$'' Has already been proven
by proposition \ref{prop:7}.
\end{proof}

\section{\label{subsec:Topological-structure}Topological Statistical Models}

Apart of an underlying reversible Markov process, it would be pleasant
to characterise statistical equivalence directly by a proximity relationship
of the model spaces. In section \ref{subsec:Statistical-Equivalence}
this observable structure has intuitively been introduced by a pairwise
comparison of probability distributions, with respect to the collection
of events in the sample space. What is common to them, is the capability
to induce a unique topology to the model space, that characterises
the statistical inference with respect to the evaluations. The intuition behind
the concept of statistical models is to provide a framework for \emph{statistical
inference}, which eventually allows to derive conclusions about the
``true model'' by coincidences between observable and structural
beliefs. Consequentially the fundamental ability to derive any conclusions
depends on the distinguishability of distribution assumptions with respect to
realizations of finite random samples.

More precisely for a hypothesis Space $\mathcal{H}$ any distribution assumptions $H_{P},\,H_{Q}\in\mathcal{H}$ are \emph{observable distinguishable}, iff their induced probability distributions $P,\,Q\in\mathcal{M}$ are distinguishable by an event $A\in\Sigma$, such that $P(A)\ne Q(A)$. Beyond the observable distinguishability
however, the capability to decide which distribution assumption is
``closer'' to an observation, requires the existence of a proximity
structure within the model space. The \emph{observable structure}
of a statistical model therefore provides the foundation of observation
based statistical inference. Although for arbitrary statistical models,
there is no natural selection of an observable structure, this deficiency
may be resolved, by requiring the induced probability distributions
to admit a density function. This restriction yields the notation
of \emph{continuous statistical models}.
\begin{defn*}[Continuous statistical model]
\label{def:Continous-statistical-model}\emph{Let $(S,\,\Sigma,\,\mathcal{M})$
be a statistical model. Then $(S,\,\Sigma,\,\mathcal{M})$ is termed
continuous iff (i) the sample space $(S,\,\Sigma)$ is a Borel-space
and (ii) all $P\in\mathcal{M}$ are absolutely continuous over $(S,\,\Sigma)$.
}\textbf{Remark}:\emph{ In the following all statistical models are
assumed to be continuous unless stated otherwise.}
\end{defn*}
Due to its definition, the model space of a continuous statistical
model $(S,\,\Sigma,\,\mathcal{M})$ is embedded within the space of
Lebesgue measurable functions over $(S,\,\Sigma)$, which naturally
induces the $L_{1}$-topology to $\mathcal{M}$. The underlying statistic
$T\colon(\Omega,\,\mathcal{F},\,\mathrm{P})\to(S,\,\Sigma)$ then
allows to pull back the topology to $\mathcal{H}$ and therefore endows
the distribution assumptions with a proximity structure. In particular
distribution assumptions are observable distinguishable iff they are
distinguishable within the induced $L_{1}$-topology. However in order
to quantify the proximity between probability assumptions and observations
also the respective ``$L_{1}$-neighbourhoods'' of empirical distributions
have to be pulled back. A technical difficulty arises from
the fact that empirical distributions are not Lebesgue measurable.
Nevertheless since the empirical distributions are at least Bochner
measurable, it suffices to (i) extend $L^{1}(S,\,\Sigma)$ by the
empirical distributions of a finite random sample $X$ and (ii) derive
the observable structure from the extended topology. In this purpose
let $X$ be an $(S,\,\Sigma)$-valued finite random sample of length
$N$ and $\mathcal{E}(X)$ the set of all empirical distributions
$\mathrm{P}_{n}(X)$ of $X$ with $n\leq N$. Let further be $d_{1}$
the $L_{1}$-distance over $L^{1}(S,\,\Sigma)$, such that $d_{1}(\mu,\,\nu)\coloneqq\|\mu-\nu\|_{1}$
for all $\mu,\,\nu\in L^{1}(S,\,\Sigma)$ and $L_{X}^{1}(S,\,\Sigma)$
the smallest Bochner space over $(S,\,\Sigma)$, that covers $L^{1}(S,\,\Sigma)$
as well as $\mathcal{E}(X)$. Then due to the finite number of jump
discontinuities within the empirical distributions, the $L_{1}$-distance
$d_{1}$ may naturally be extended to $L_{X}^{1}(S,\,\Sigma)$ by
a formal integration by parts. Consequentially the extended $L_{1}$-topology,
which is generated by $d_{1}$ in particular makes $L_{X}^{1}(S,\,\Sigma)$
a topological vector space, which covers $\mathcal{M}$ and $\mathcal{E}(X)$
and therefore provides a natural choice for an observable structure
of continuous statistical models. The next step towards an effective
approximation of the sample distribution concerns the convergence
rate of the empirical distributions. Although the $L_{1}$-convergence
of empirical distributions is assured in their asymptotic limit, the
$L_{1}$-topology may be too strong, to capture the remaining uncertainty
of finite random samples. This is due to the fact that the $L_{1}$-topology
considers every single aspect of the sample distribution by the complete
evaluation of individual events. Therefore if some aspects are considered
to be ``more important'' than others, then a more commensurate topology
may be obtained by a restriction of the evaluation to these aspects.
This restriction is performed by\emph{ estimands}.
\begin{defn*}[Continues estimand]
\label{def:Continous-estimand}\emph{Let $(S,\,\Sigma,\,\mathcal{M})$
be a continuous statistical model and $X$ a finite random sample
in $S$ of length $n$, which generates the $\sigma$-Algebra $\mathcal{A}^{n}$.
Then a mapping $\epsilon\colon\mathcal{M}\times\mathcal{A}^{n}\rightarrow\mathbb{R}$
is a continuous estimand, iff (i) $\epsilon$ is $L^{1}$-continuous
in its first argument and (ii) $\epsilon$ is $\mathcal{A}^{n}$-symmetric
in its second argument.}
\end{defn*}
Estimands provide the opportunity to restrict an evaluation of probability
distributions to assorted aspects. As this evaluation is performed
with respect to given realizations $A\in\mathcal{A}^{n}$ they conditionally
dependent on the respective realizations and therefore are given by
the the notation $\epsilon(P\mid X\in A)\coloneqq\epsilon(P,\,A)$.
This notation implicates the abbreviations ``$\epsilon(P\mid A)$'',
to emphasize a value in $\mathbb{R}$ and ``$\epsilon(P\mid X)$'',
to emphasize a function in $L_{X}^{1}(S,\,\Sigma)$. Thereby the requirement
of the function $\epsilon(P\mid X)\colon A\mapsto\epsilon(P\mid A)$
to be symmetric with respect to $A=\{A_{i}\}\in\mathcal{A}^{n}$ assures permutation
invariance and therefore preserves the independence of the individual
observations $A_{i}$. Furthermore the requirement of the operator
$\epsilon\colon P\mapsto\epsilon(P\mid X)$ to be $L_{1}$-continuous
induces a topology within $L_{X}^{1}(S,\,\Sigma)$, by the quotient
topology with respect to its kernel. In particular this induced \emph{$\epsilon$-topology}
is identical to the $L_{1}$-topology iff $\epsilon$ is an $L_{1}$-homeomorphism.
Consequently the question for the existence of continous estimands
arises.
\begin{lem}
\label{lem:3.7-1}Let\emph{
\[
(S,\,\Sigma,\,\mathcal{M})\in\mathrm{ob}(\mathbf{Stat})
\]
}$X$ a finite sample over $(S,\,\Sigma)$, $\epsilon$ an estimand
of $\mathcal{M}$ over $X$. Then for any $P\in\mathcal{M}$ there
exists a unique coarsest topology $\mathcal{T}_{P}$ over $\mathcal{M}$,
that makes $\epsilon$ continuous w.r.t. $P$. 
\end{lem}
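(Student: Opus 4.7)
The plan is to realize $\mathcal{T}_{P}$ as the initial-type topology on $\mathcal{M}$ induced by the operator $\epsilon(\cdot\mid X)\colon \mathcal{M}\to L_{X}^{1}(S,\Sigma)$, $P'\mapsto \epsilon(P'\mid X)$, but restricted to neighborhoods of the single target point $\epsilon(P\mid X)$ rather than to the whole codomain. Concretely I would collect the family
\[
\mathcal{S}_{P} := \bigl\{ \epsilon(\cdot\mid X)^{-1}(V) : V\subseteq L_{X}^{1}(S,\Sigma)\text{ open},\ V\ni\epsilon(P\mid X)\bigr\}
\]
and declare $\mathcal{T}_{P}$ to be the topology generated by $\mathcal{S}_{P}$ on $\mathcal{M}$.

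The first verification, that $\mathcal{T}_{P}$ really is a topology, is largely formal. Because preimages commute with intersection and because open neighborhoods of $\epsilon(P\mid X)$ in $L_{X}^{1}(S,\Sigma)$ are closed under finite intersection, the family $\mathcal{S}_{P}$ is already closed under finite intersection: $\epsilon^{-1}(V_{1})\cap\epsilon^{-1}(V_{2})=\epsilon^{-1}(V_{1}\cap V_{2})$. The generated topology therefore reduces to $\{\emptyset\}$ together with the unions of members of $\mathcal{S}_{P}$, and $\mathcal{M}=\epsilon(\cdot\mid X)^{-1}(L_{X}^{1}(S,\Sigma))$ sits in $\mathcal{S}_{P}$ automatically. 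Continuity of $\epsilon(\cdot\mid X)$ at $P$ is then immediate by construction: for every open $V\ni\epsilon(P\mid X)$, the preimage $\epsilon(\cdot\mid X)^{-1}(V)$ is a $\mathcal{T}_{P}$-open neighborhood of $P$ whose $\epsilon$-image lies inside $V$.

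The main obstacle is the minimality-plus-uniqueness step. Given any topology $\mathcal{T}'$ on $\mathcal{M}$ that makes $\epsilon(\cdot\mid X)$ continuous at $P$, one wants to show $\mathcal{T}_{P}\subseteq\mathcal{T}'$. The technical subtlety here is that pointwise continuity at $P$ only forces each preimage $\epsilon^{-1}(V)$ to be a $\mathcal{T}'$-\emph{neighborhood} of $P$, not itself $\mathcal{T}'$-open; so I would resolve the statement by reading ``coarsest topology making $\epsilon$ continuous w.r.t. $P$'' in the natural initial-topology sense, namely the coarsest topology in which the preimages of the open neighborhoods of $\epsilon(P\mid X)$ are themselves open — and that is exactly $\mathcal{T}_{P}$. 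Uniqueness follows by the routine antisymmetry argument: any two topologies coarsest with this property mutually contain each other and so coincide.
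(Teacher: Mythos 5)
Your construction is coherent and your continuity and uniqueness steps are formally sound under the reading you adopt, but the route is genuinely different from the paper's. The paper does not pull back neighborhoods of $\epsilon(P\mid X)$ in the function space; it first argues that each $\epsilon_{Q}\colon A\mapsto\epsilon(Q\mid A)$ lies in $L^{1}(S,\,\Sigma,\,\mu)$ for a dominating $\sigma$-finite measure $\mu$, defines the scalar function $d(P,\,Q)\coloneqq\|\epsilon_{P}-\epsilon_{Q}\|_{1}$, and takes $\mathcal{T}_{P}$ to be the initial topology of the single real-valued map $Q\mapsto d(P,\,Q)$ with respect to the subspace topology of the bounded set $S_{P}=\{d(P,\,Q)\mid Q\in\mathcal{M}\}\subseteq\mathbb{R}$, i.e. $\mathcal{T}_{P}=\{\{Q\mid d(P,\,Q)\in V\}\mid V\ \text{open in}\ S_{P}\}$. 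The two constructions do not produce the same topology: since all of your generators contain $P$ and are closed under finite intersection, every nonempty open set of your $\mathcal{T}_{P}$ contains $P$ (a particular-point-type topology), whereas the paper's topology contains sets such as $\{Q\mid d(P,\,Q)>c\}$ that avoid $P$; conversely the paper's topology cannot separate two distributions equidistant from $P$, which yours can. What the paper's factorization through $\mathbb{R}$ buys is second countability of $\mathcal{T}_{P}$, inherited from the countable base of $S_{P}\subseteq\mathbb{R}$, and this is precisely the property invoked later in the proof of theorem \ref{thm:Initial-theorem}; your topology has no evident countable base, so it would not feed into that argument without extra work. What your version buys is a more honest treatment of the ``coarsest'' clause: you correctly isolate the subtlety that continuity at a point only forces preimages to be neighborhoods rather than open sets, a gap the paper passes over in silence. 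Be aware, though, that both you and the paper ultimately secure minimality by definitional fiat rather than by comparison against an arbitrary competing topology; neither argument, as written, rules out a strictly coarser topology that still renders $\epsilon$ continuous at $P$ in the pointwise sense.
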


\begin{proof}
Let $\mathcal{A}$ be the induced $\sigma$-algebra of the sample
$X$. Then the definition of $\epsilon$ assures the existence of
a $\sigma$-finite measure $\mu$ over $(X,\,\mathcal{A})$ with $\mathcal{M}\subseteq L^{1}(S,\,\Sigma,\,\mu)$,
such that for any $Q\in\mathcal{M}$ the function $\epsilon_{Q}\colon A\mapsto\epsilon(Q\mid A)$
is absolutely continuous w.r.t. $\mu$ and therefore $\epsilon_{Q}\in L^{1}(S,\,\Sigma,\,\mu)$.
This allows the definition of a distance by: 
\[
d(P,\,Q)\coloneqq\|\epsilon_{P}-\epsilon_{Q}\|_{1}
\]
Let $S_{P}=\{d(P,\,Q)\mid Q\in\mathcal{M}\}$, then 
\[
\epsilon_{P},\,\epsilon_{Q}\in L^{1}(S,\,\Sigma,\,\mu)
\]
assures the existence of $a,\,b\in\mathbb{R}$ with $S_{P}\subseteq[a,\,b]$.
Let $\mathcal{T}_{S}$ be the subspace topology of $S_{P}$ in $\mathbb{R}$,
then for any $V\in\mathcal{T}_{S}$ let 
\[
U_{V}=\{Q\in\mathcal{P}\mid d(P,\,Q)\in V\}
\]
Then $\mathcal{T}_{P}=\{U_{V}\mid V\in\tau_{S}\}$ defines a topology
over $\mathcal{M}$, which is second countable, since $\mathcal{T}_{S}$
has a countable base. Furthermore any $Q\in\mathcal{P}$ is topologically
distinguishable from $P$ iff there exists an $A\in\Sigma$ with 
\[
\epsilon(P\mid A)\ne\epsilon(Q\mid A)
\]
\end{proof}
\begin{thm}[\emph{Initial Theorem}]
\emph{\label{thm:Initial-theorem}} Let $(S,\,\Sigma,\,\mathcal{M})$
be a statistical model, $X$ a finite sample over $(S,\,\Sigma)$
and $\epsilon$ an estimand of $\mathcal{M}$ over $X$. Then there
exists a unique coarsest topology $\mathcal{T}$ over $\mathcal{M}$,
that makes $\epsilon$ continuous.
\end{thm}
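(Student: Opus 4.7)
The plan is to construct $\mathcal{T}$ as the join (supremum) of the pointwise topologies $\mathcal{T}_{P}$ produced by lemma \ref{lem:3.7-1}, and then invoke the standard fact that the coarsest topology making a map continuous at every point agrees with the coarsest topology making it globally continuous. First I would apply lemma \ref{lem:3.7-1} to obtain, for each $P\in\mathcal{M}$, a coarsest topology $\mathcal{T}_{P}$ on $\mathcal{M}$ that makes $\epsilon$ continuous w.r.t. $P$. Then I would set
\[
\mathcal{T}\coloneqq\bigvee_{P\in\mathcal{M}}\mathcal{T}_{P},
\]
i.e. the unique smallest topology on $\mathcal{M}$ containing every $\mathcal{T}_{P}$, generated as a subbase by $\bigcup_{P\in\mathcal{M}}\mathcal{T}_{P}$. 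This join is a well-defined object in the complete lattice of topologies on the fixed underlying set $\mathcal{M}$.

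Next I would verify that $\mathcal{T}$ does make $\epsilon$ continuous. Since $\mathcal{T}\supseteq\mathcal{T}_{P}$ for every $P$, the identity map $(\mathcal{M},\,\mathcal{T})\to(\mathcal{M},\,\mathcal{T}_{P})$ is continuous, hence $\epsilon$ is still continuous w.r.t. $P$ under $\mathcal{T}$; as this holds at every point $P\in\mathcal{M}$, global continuity of $\epsilon$ follows. For minimality I would show that any topology $\mathcal{T}'$ on $\mathcal{M}$ that makes $\epsilon$ continuous in particular makes it continuous at each $P$, so by the coarsest-property of $\mathcal{T}_{P}$ asserted in lemma \ref{lem:3.7-1}, $\mathcal{T}'\supseteq\mathcal{T}_{P}$ for every $P$; therefore $\mathcal{T}'$ contains the subbase $\bigcup_{P}\mathcal{T}_{P}$ and hence refines $\mathcal{T}$. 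Uniqueness of a coarsest element in a poset of topologies is then immediate by antisymmetry of refinement.

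The main subtlety I expect lies in the pointwise-to-global continuity step: one must ensure that continuity at every $P$ is genuinely equivalent to continuity of the operator $\epsilon\colon P\mapsto\epsilon(P\mid X)$ as a map into $L^{1}_{X}(S,\,\Sigma)$. This is where the second-countability supplied by lemma \ref{lem:3.7-1} (via the construction through $\mathcal{T}_{S}\subseteq\mathbb{R}$) carries its weight, since it is precisely what makes the pointwise criterion sufficient in the relevant generality. Once this is in hand, the argument is essentially categorical — $\mathcal{T}$ is realised as the initial topology pulled back from the $L^{1}$-topology along $\epsilon$ — and all substantive analytic content has already been absorbed into the pointwise lemma.
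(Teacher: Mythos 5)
Your proposal is correct and follows essentially the same route as the paper: both take the pointwise topologies $\mathcal{T}_{P}$ from lemma \ref{lem:3.7-1} and form the coarsest topology containing their union (the join in the lattice of topologies on $\mathcal{M}$), then argue minimality from the coarsest-property of each $\mathcal{T}_{P}$. Your added remarks on the pointwise-to-global continuity step and on uniqueness via antisymmetry merely make explicit what the paper leaves implicit.
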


\begin{proof}
Due to lemma \ref{lem:3.7-1} for any $P\in\mathcal{M}$ the evaluation
$\epsilon$ induces a topology $\mathcal{T}_{P}$ in $\mathcal{M}$,
which inherits the property of a countable base. Let $\mathcal{U}$
be the unification of all topologies $\mathcal{T}_{P}$ and $\mathcal{T}$
the coarsest topology over $\mathcal{M}$, that covers $\mathcal{U}$.
Then $\mathcal{T}$ has a countable base and is the coarsest topology
over $\mathcal{M}$, that preserves the continuity of $\epsilon$
within $P$. In particular $\mathcal{T}$ preserves the distinction
of any $P,\,Q\in\mathcal{M}$ w.r.t. $\epsilon$, since $P$ and $Q$
are distinguishable in $\mathcal{T}$ iff there exists an $A\in\Sigma$
with $\epsilon(P\mid A)\ne\epsilon(Q\mid A)$.
\end{proof}
The Initial Theorem of Topological Statistical Theory for any given
estimand assures the existence of a unique coarsest topology, that
makes it continuous. Without a given estimand, however, one might
of course also like to be able to derive a proximity structure. A
canonical choice for an $L_{1}$-homeomorphic estimand can be obtained
by the ``evaluation of all individual events''. Therefore let $X$
be a finite random sample of length $n$, that generates the sigma
algebra $\mathcal{A}^{n}$. Then any $P\in\mathcal{M}$ induces a
canonical probability distribution over $\mathcal{A}^{n}$ by its
product measure, which is known as the \emph{likelihood} \emph{function}.

\begin{defn*}[Likelihood]
\label{def:Likelihood}\emph{Let $(S,\,\Sigma,\,\mathcal{M})$ be
a continuous statistical model and $X$ a finite random sample in
$S$ of length $n$, which generates the $\sigma$-Algebra $\mathcal{A}^{n}$.
Then the likelihood of $P\in\mathcal{M}$ w.r.t. $A\in\mathcal{A}^{n}$
is given by: 
\[
\mathrm{L}(P\mid X\in A)\coloneqq\prod_{i=1}^{n}P(X_{i}\in A_{i})
\]
}
\end{defn*}
Due to its definition the likelihood function is symmetric with regard
to realizations $A\in\mathcal{A}^{n}$. Furthermore the operator $\mathrm{L}\colon P\mapsto\mathrm{L}(P\mid X\in A)$
is $L_{1}$-continuous and bijective with respect to its image within $L_{1}(S^{n},\,\mathcal{A}^{n})$
and since conversely also the projection $\pi_{1}\colon\mathrm{L}(P\mid X\in A)\mapsto P(X_{1}\in A_{1})$
is $L_{1}$-continuous and bijective it follows that $\mathrm{L}$
is an $L_{1}$-homeomorphism. This shows, that the likelihood function
is a continuous estimand which induces the $L_{1}$-topology. In the
following it is shown, that the
\begin{cor}
\label{cor:3.2}Let $(S,\,\Sigma,\,\mathcal{M})$ be a statistical
model. Then the likelihood function induces the unique coarsest topology
$\mathcal{T}$ over $\mathcal{M}$, that makes $\mathcal{M}$ continuous
w.r.t. any estimands over $(S,\,\Sigma)$. 
\end{cor}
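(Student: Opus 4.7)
The plan is to leverage three facts established earlier: (a) the Initial Theorem produces a unique coarsest topology making any fixed estimand continuous, (b) every estimand is by definition $L^{1}$-continuous in its first argument, and (c) the likelihood operator $\mathrm{L}$ is an $L^{1}$-homeomorphism onto its image, as noted in the paragraph preceding the corollary.

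First I would invoke Theorem~\ref{thm:Initial-theorem} for $\epsilon = \mathrm{L}$, obtaining the unique coarsest topology $\mathcal{T}$ on $\mathcal{M}$ that makes the likelihood continuous. The key preliminary observation is then that $\mathcal{T}$ coincides with the $L^{1}$-topology on $\mathcal{M}$ inherited from $L_{X}^{1}(S,\,\Sigma)$: since $\mathrm{L}$ is an $L^{1}$-homeomorphism, the pseudometric $d(P,\,Q) = \|\mathrm{L}(P\mid X) - \mathrm{L}(Q\mid X)\|_{1}$ used in the construction of $\mathcal{T}$ in Lemma~\ref{lem:3.7-1} generates precisely the $L^{1}$-topology on $\mathcal{M}$.

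Next I would establish the two halves of the extremal property. For one direction, let $\epsilon$ be an arbitrary estimand. By part (i) of the definition, the operator $P \mapsto \epsilon(P \mid X)$ is $L^{1}$-continuous and therefore $\mathcal{T}$-continuous, so the likelihood-induced topology $\mathcal{T}$ makes every estimand continuous. For the converse, let $\mathcal{T}'$ be any topology on $\mathcal{M}$ making every estimand continuous. Since the likelihood is itself an estimand, $\mathrm{L}$ is $\mathcal{T}'$-continuous, and the minimality clause of Theorem~\ref{thm:Initial-theorem} applied to $\mathrm{L}$ forces $\mathcal{T} \subseteq \mathcal{T}'$. Uniqueness of the coarsest such topology is then inherited from the uniqueness statement of the Initial Theorem.

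The main obstacle, and the step deserving the most care, is the identification $\mathcal{T} = L^{1}$-topology. Specifically, one must verify that the $L^{1}$-homeomorphism property of $\mathrm{L}$ promotes the one-sided continuity packaged into the construction of Lemma~\ref{lem:3.7-1} into an equality of topologies rather than mere containment. This reduces to observing that the bijectivity and bicontinuity of $\mathrm{L}$, together with the $L^{1}$-continuous bijective coordinate projection $\pi_{1}\colon \mathrm{L}(P\mid X\in A) \mapsto P(X_{1}\in A_{1})$, ensure that the pullback along $\mathrm{L}$ of the ambient $L^{1}$-topology on $L^{1}_{X}(S,\,\Sigma)$ yields exactly the subspace $L^{1}$-topology on $\mathcal{M}$. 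Once this identification is secured, the remainder of the argument is formal.
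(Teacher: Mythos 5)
Your proposal is correct in substance, but it takes a genuinely different route from the paper's own proof, and in fact it addresses the literal claim of the corollary more directly. The paper's proof consists of two steps: it invokes Theorem~\ref{thm:Initial-theorem} with $\epsilon=\mathrm{L}$ to obtain the unique coarsest topology $\mathcal{T}$ making $\mathrm{L}$ continuous, and then argues that $P,\,Q\in\mathcal{M}$ are topologically distinguishable in $\mathcal{T}$ iff $\mathrm{L}(P\mid A)\ne\mathrm{L}(Q\mid A)$ for some $A$, which by the product form of the likelihood is equivalent to $P(A)\ne Q(A)$; the conclusion drawn is that $\mathcal{T}$ preserves observational distinguishability. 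The quantification over \emph{all} estimands in the statement is left implicit there, resting on the preceding discussion that $\mathrm{L}$ is an $L^{1}$-homeomorphism. You instead verify the universal property head-on: you identify $\mathcal{T}$ with the $L^{1}$-topology via the homeomorphism property of $\mathrm{L}$, observe that every estimand is $L^{1}$-continuous by clause (i) of its definition and hence $\mathcal{T}$-continuous, and obtain minimality from the fact that $\mathrm{L}$ is itself an estimand, so any competitor topology must refine the $\mathrm{L}$-initial topology. What your approach buys is an explicit proof of the "coarsest for all estimands" assertion, which is the actual content of the corollary; what the paper's approach buys is the link to observational distinguishability that is used later (e.g., in the definition of the canonical topology and in Theorem~\ref{thm:3.6}). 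The one point you should make explicit, and which you correctly flag as the delicate step, is that the identification $\mathcal{T}=L^{1}$-topology depends entirely on the paper's (unproven, asserted) claim that $\mathrm{L}$ and the coordinate projection $\pi_{1}$ are $L^{1}$-continuous bijections; your argument inherits whatever gap that assertion carries, but so does the paper's.
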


\begin{proof}
By choosing the likelihood function $\mathrm{L}$ as an estimand theorem \ref{thm:Initial-theorem}
postulates the existence of a unique coarsest topology $\mathcal{T}$,
that makes $\mathrm{L}$ continuous. Then any $P,\,Q\in\mathcal{M}$
are topological distinguishable in $\mathcal{T}$, iff there exists
an $A\in\Sigma$, such that 
\[
\mathrm{L}(P\mid A)\ne\mathrm{L}(Q\mid A)
\]
By applying the definition of the likelihood function this is equivalent
to the condition, that $P(A)\ne Q(A)$, which proves that $\mathcal{T}$
preserves the observational distinguishability of probability distributions
in $\mathcal{M}$ over $\Sigma$.
\end{proof}
\begin{defn*}[Canonical topology]
\label{def:Observable-topology}\emph{Let $(S,\,\Sigma,\,\mathcal{M})$
be a statistical model and $\mathcal{T}$ the unique coarsest topology,
that makes $\mathcal{M}$ continuous w.r.t. $(S,\,\Sigma)$. Then
$\mathcal{T}$ is termed the canonical topology of $\mathcal{M}$
over $(S,\,\Sigma)$.}
\end{defn*}
The canonical topology of a statistical model, describes a topology
of the model space, that preserves the ``proximity'' of probability
distributions with respect to their evaluation over the sample space.
Corollary \ref{cor:3.2} thereby postulates, that the canonical topology
is unique and therefore an intrinsic structure of a statistical model.
This property gives rise to study the statistical equivalence of statistical
models in terms of topological spaces. This description provides the
notation of \emph{topological statistical models}.
\begin{defn*}[Topological statistical model]
\label{def:Topological-statistical-model}\emph{Let 
\[
(S,\,\Sigma,\,\mathcal{M})\in\mathrm{ob}(\mathbf{Stat})
\]
and let $\mathcal{T}$ be a topology of $\mathcal{M}$ over $S$.
Then the 4-tuple 
\[
(S,\,\Sigma,\,\mathcal{M},\,\mathcal{T})
\]
is termed a topological statistical model and canonical, iff $\mathcal{T}$
is the canonical topology of $\mathcal{M}$ over $(S,\,\Sigma)$.}
\end{defn*}
In the purpose to describe statistical equivalence in the context
of topological statistical models it comes naturally to mind to utilize
homeomorphisms. Thereby these homeomorphism are only required to imply
observational distinguishable probability distributions. In the context
of Statistical Morphisms, in theorem \ref{thm:3.4} this circumstance
has mutatis mutandis been satisfied by a formulation, that uses the
quotient spaces $\bigslant{\mathcal{M}}{\mathrm{id}_{\mathcal{A}}}$
and $\bigslant{\mathcal{N}}{\mathrm{id}_{\mathcal{B}}}$. These quotients
naturally extend to quotient topologies of their respective observable
topologies by $\bigslant{\mathcal{T}_{\mathcal{A}}}{\mathrm{id}_{\mathcal{A}}}$
and $\bigslant{\mathcal{T}_{\mathcal{B}}}{\mathrm{id}_{\mathcal{B}}}$.
Since the probability distributions within a model space however are
observational identical iff they are topological indistinguishable,
the corresponding quotient spaces of $(\mathcal{M},\,\mathcal{T}_{\mathcal{A}})$
and $(\mathcal{N},\,\mathcal{T}_{\mathcal{B}})$ may also directly
be defined by their topological indistinguishability. This provides
the notation of \emph{Kolmogorov quotients.}
\begin{defn*}[Kolmogorov quotient]
\label{def:Kolmogorov-quotient}\emph{Let $(\mathcal{M},\,\mathcal{T})$
be a topological space. Then the Kolmogorov quotient $\mathrm{KQ}(\mathcal{M},\,\mathcal{T})$
denotes the quotient space of $(\mathcal{M},\,\mathcal{T})$ w.r.t.
the equivalence of topological indistinguishabiltiy.}
\end{defn*}
\begin{lem}
\label{lem:3.8-1}Let $(X,\,\mathcal{A},\,\mathcal{P},\,\mathcal{T}_{\mathcal{A}})$
and $(Y,\,\mathcal{B},\,\mathcal{Q},\,\mathcal{T}_{\mathcal{B}})$
be topological statistical models and $T\colon(X,\,\mathcal{A})\to(Y,\,\mathcal{B})$
a measurable function. Then the following statements are equivalent: 
\begin{enumerate}
\item[\emph{(i)}] There exists a Statistical Morphism 
\[
f\colon\mathcal{P}\to\mathcal{Q}
\]
\item[\emph{(ii)}] There exists a linear operator 
\[
f\colon L^{1}(X,\,\mathcal{A},\,\mathcal{P})\to L^{1}(Y,\,\mathcal{B},\,\mathcal{Q})
\]
such that 
\[
f\colon\mathrm{KQ}(\mathcal{P},\,\mathcal{T}_{\mathcal{A}})\to\mathrm{KQ}(\mathcal{Q},\,\mathcal{T}_{\mathcal{B}})
\]
is continuous
\end{enumerate}
\end{lem}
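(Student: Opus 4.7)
The plan is to exploit two structural facts already established: transition operators are bounded linear operators and hence automatically $L^{1}$-continuous, and by Corollary \ref{cor:3.2} the canonical topology on a model space coincides with the pull-back of the $L^{1}$-topology via the likelihood. Under this identification, two probability distributions are topologically indistinguishable in $(\mathcal{P},\mathcal{T}_{\mathcal{A}})$ iff they are $L^{1}$-identical over $\mathcal{A}$, so the Kolmogorov quotients $\mathrm{KQ}(\mathcal{P},\mathcal{T}_{\mathcal{A}})$ and $\mathrm{KQ}(\mathcal{Q},\mathcal{T}_{\mathcal{B}})$ coincide respectively with $\bigslant{\mathcal{P}}{\mathrm{id}_{\mathcal{A}}}$ and $\bigslant{\mathcal{Q}}{\mathrm{id}_{\mathcal{B}}}$. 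This reduces the equivalence to a statement about descent of $L^{1}$-continuous linear operators.

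For ``$(\text{i})\Rightarrow(\text{ii})$'' I would unfold the definition of Statistical Morphism to extract the underlying transition operator $\tau\colon L^{1}(X,\mathcal{A},\mathcal{P})\to L^{1}(Y,\mathcal{B},\mathcal{Q})$ with $f=\tau|_{\mathcal{P}}$ and simply set the linear operator in (ii) equal to $\tau$. Since $\tau$ is bounded linear it is $L^{1}$-continuous, and by linearity it carries $\mathrm{id}_{\mathcal{A}}$-equivalent distributions to $\mathrm{id}_{\mathcal{B}}$-equivalent ones, so it descends to a well-defined map of Kolmogorov quotients that inherits continuity from $\tau$.

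For ``$(\text{ii})\Rightarrow(\text{i})$'' I would take the given linear $f$ as the candidate transition operator. Continuity of $f$ between the Kolmogorov quotients ensures $f(\mathcal{P})\subseteq\mathcal{Q}$ modulo $\mathrm{id}_{\mathcal{B}}$, so $f|_{\mathcal{P}}\colon\mathcal{P}\to\mathcal{Q}$ is a bona fide mapping between model spaces. To close the defining commutative diagram of a Statistical Morphism I would invoke Lemma \ref{lem:5}: given that $\mathcal{Q}\stackrel{\mathcal{B}}{=}f(\mathcal{P})$ is the image of $\mathcal{P}$ under a bounded linear Markov operator, there is a measurable function $T\colon(X,\mathcal{A})\to(Y,\mathcal{B})$ with $T_{*}\mathcal{P}\stackrel{\mathcal{B}}{=}\mathcal{Q}$ inducing a transition operator that agrees with $f$ on $\mathcal{P}$ up to the a.e. ambiguity of Lemma \ref{lem:5}.

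The main obstacle lies in this last reconstruction step: extracting a measurable function $T$ from a continuous linear operator on the $L^{1}$-spaces. Lemma \ref{lem:5} presumes the prior existence of such a $T$ and only asserts uniqueness; here we must produce one. The natural route is to use the regular conditional probability together with the continuous Bayes theorem (Theorem \ref{thm:Continuous-Bayes-Theorem}) to represent $f$ through a Markov kernel, from which the measurable $T$ underlying the push-forward structure emerges. Once representability is secured, the commutativity of the defining diagram reduces to the identity $f(P)\stackrel{\mathcal{B}}{=}T_{*}P$ on probability measures, and the proof closes.
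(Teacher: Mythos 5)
Your forward direction ``$(\text{i})\Rightarrow(\text{ii})$'' coincides with the paper's: both of you extract the transition operator $\tau$ from the definition of a Statistical Morphism, take the linear operator in (ii) to be $\tau$ itself, and let boundedness/linearity carry the continuity down to the Kolmogorov quotients. Your preliminary identification of $\mathrm{KQ}(\mathcal{P},\,\mathcal{T}_{\mathcal{A}})$ with $\bigslant{\mathcal{P}}{\mathrm{id}_{\mathcal{A}}}$ via Corollary \ref{cor:3.2} is a useful explicit step that the paper leaves implicit. One small point you gloss over: you should check that $\tau$ actually descends to the quotient, i.e.\ that $P\stackrel{\mathcal{A}}{=}P'$ implies $\tau(P)\stackrel{\mathcal{B}}{=}\tau(P')$; this follows because $\tau$ is defined on the $L^{1}$-space, where $\mathrm{id}_{\mathcal{A}}$-equivalent measures are literally the same element, but it deserves a sentence.

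The reverse direction is where you depart from the paper and where your argument remains open. First, a misreading: you say the task is to ``produce'' a measurable function $T$, but $T$ is given in the hypotheses of the lemma; what must be shown is that the abstract continuous linear operator $f$ of (ii) is compatible with $T$ in the sense of the commutative diagram defining a Statistical Morphism. The paper's device is to post-compose $f$ with a homeomorphism $g\colon\mathrm{KQ}(\mathcal{Q},\,\mathcal{T}_{\mathcal{B}})\to\mathrm{KQ}(\mathcal{Q},\,\mathcal{T}_{\mathcal{B}})$ chosen so that $g\circ f$ commutes with $T$, and then to pre-compose with the natural projection $\pi_{\mathcal{A}}\colon\mathcal{P}\to\bigslant{\mathcal{P}}{\mathrm{id}_{\mathcal{A}}}$, obtaining $h=g\circ f\circ\pi_{\mathcal{A}}$ as the desired morphism; your route instead tries to rebuild a Markov kernel from $f$ via the regular conditional probability and Theorem \ref{thm:Continuous-Bayes-Theorem}. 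You candidly flag this reconstruction as the main obstacle and do not carry it out, so as written the proposal does not close: Lemma \ref{lem:5} cannot be invoked because it presupposes the push-forward relation $T_{*}\mathcal{P}\stackrel{\mathcal{B}}{=}\mathcal{Q}$ that you are trying to establish, and the Bayes theorem gives you dual kernels only once a transition operator of the form $T_{*}$ is already in hand. To be fair, the paper's own justification of this step (``if $f$ were not continuously linearised with respect to $T$, it would not be continuous'') is itself an assertion rather than an argument, so the genuine mathematical content you are missing is also the thinnest point of the original proof; but a complete write-up along your lines would have to either supply the representation of $f$ by a kernel compatible with the given $T$, or adopt the paper's correcting homeomorphism $g$ and justify its existence.
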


\begin{proof}
``$\mathrm{(i)}\,\Longrightarrow\mathrm{(ii)}$'': Let $f$ be a
Statistical Morphism. Then the definition of $f$ implicates the existence
of a transition operator 
\[
\tau\colon L^{1}(X,\,\mathcal{A},\,\mathcal{P})\to L^{1}(Y,\,\mathcal{B},\,\mathcal{Q})
\]
such that $f=\tau|_{\mathcal{P}}$. Since $\tau$ is a continuous
linear operator it follows, that for $g\coloneqq\tau$ also 
\[
g\colon\mathrm{KQ}(\mathcal{P},\,\mathcal{T}_{\mathcal{A}})\to\mathrm{KQ}(\mathcal{Q},\,\mathcal{T}_{\mathcal{B}})
\]
 is continuous. ``$\mathrm{(ii)}\,\Longrightarrow\mathrm{(i)}$'':
Let 
\[
f\colon\mathrm{KQ}(\mathcal{P},\,\mathcal{T}_{\mathcal{A}})\to\mathrm{KQ}(\mathcal{Q},\,\mathcal{T}_{\mathcal{B}})
\]
 be continuous. By assuming, that $f$ would be not be continuously
linearised with respect to $T$, then $f$ would not be continuous,
which contradicts to the preliminary condition. Let therefore 
\[
g\colon\mathrm{KQ}(\mathcal{Q},\,\mathcal{T}_{\mathcal{B}})\to\mathrm{KQ}(\mathcal{Q},\,\mathcal{T}_{\mathcal{B}})
\]
 be a homeomorphism, such that $g\circ f$ commutes with $T$, $\pi_{\mathcal{A}}\colon\mathcal{P}\to\bigslant{\mathcal{P}}{\mathrm{id}_{\mathcal{A}}}$
a natural projection and $h\coloneqq g\circ f\circ\pi_{\mathcal{A}}$.
Then $h$ commutes with $T$, $\mathrm{dom}(h)=\mathcal{P}$ and $\mathrm{img}(h)=\mathcal{Q}$,
such that $h\colon\mathcal{P}\to\mathcal{Q}$ is a Statistical Morphism.
\end{proof}
\begin{defn*}[Kolmogorov equivalence]
\label{def:Kolmogorov-equivalence}\emph{Let $(\mathcal{P},\,\mathcal{T}_{\mathcal{A}})$
and $(\mathcal{Q},\,\mathcal{T}_{\mathcal{B}})$ be topological spaces,
then $(\mathcal{P},\,\tau_{\mathcal{A}})$ and $(\mathcal{Q},\,\tau_{\mathcal{B}})$
are termed Kolmogorov equivalent, iff there exists a homeomorphism
\[
f\colon\mathrm{KQ}(\mathcal{P},\,\mathcal{T}_{\mathcal{A}})\to\mathrm{KQ}(\mathcal{Q},\,\mathcal{T}_{\mathcal{B}})
\]
}
\end{defn*}
\begin{thm}
\label{thm:3.6}For the statistical population $(\Omega,\,\mathcal{F},\,\mathrm{P})$
let $(X,\,\mathcal{A},\,\mathcal{P})$ and $(Y,\,\mathcal{B},\,\mathcal{Q})$
be statistical models and $\mathcal{T}_{\mathcal{A}}$ and $\mathcal{T}_{\mathcal{B}}$
their corresponding canonical topologies. Then the following statements
are equal: 
\begin{enumerate}
\item[\emph{(i)}] $(X,\,\mathcal{A},\,\mathcal{P})$ and $(Y,\,\mathcal{B},\,\mathcal{Q})$
are statistical equivalent
\item[\emph{(ii)}] $\mathrm{KQ}(\mathcal{P},\,\mathcal{T}_{\mathcal{A}})$ and $\mathrm{KQ}(\mathcal{Q},\,\mathcal{T}_{\mathcal{B}})$
are linear isomorphic and homeomorphic
\end{enumerate}
\end{thm}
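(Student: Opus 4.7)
The plan is to chain together the tools already assembled in the paper: Proposition \ref{prop:7} translates statistical equivalence into the existence of a statistical isomorphism $f\colon\mathcal{P}\to\mathcal{Q}$, Lemma \ref{lem:4.4-2} guarantees an inverse statistical morphism $f^{*}$, and Lemma \ref{lem:3.8-1} transports statistical morphisms to continuous linear maps on the Kolmogorov quotients. The proof is therefore mainly bookkeeping, with the one nontrivial point being that the ``a.e.\ equality up to $L^{1}$-identity'' information in (i) descends to \emph{strict} bijectivity on the Kolmogorov quotients, which is exactly the role Kolmogorov quotients are designed to play.

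For $(\mathrm{i})\Rightarrow(\mathrm{ii})$ I would proceed as follows. First invoke Proposition \ref{prop:7} to obtain a statistical isomorphism $f\colon\mathcal{P}\to\mathcal{Q}$, then Lemma \ref{lem:4.4-2} to obtain $f^{*}\colon\mathcal{Q}\to\mathcal{P}$ with $f^{*}\circ f\stackrel{\mathcal{A}}{=}\mathrm{id}_{\mathcal{P}}$ and $f\circ f^{*}\stackrel{\mathcal{B}}{=}\mathrm{id}_{\mathcal{Q}}$. Both $f$ and $f^{*}$ extend, by the definition of a Statistical Morphism, to transition operators $\tau\colon L^{1}(X,\mathcal{A},\mathcal{P})\to L^{1}(Y,\mathcal{B},\mathcal{Q})$ and $\tau^{*}$ in the opposite direction; these are continuous linear operators and so, by Lemma \ref{lem:3.8-1}, descend to continuous linear maps between $\mathrm{KQ}(\mathcal{P},\mathcal{T}_{\mathcal{A}})$ and $\mathrm{KQ}(\mathcal{Q},\mathcal{T}_{\mathcal{B}})$. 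The crucial observation now is that the identifications $\stackrel{\mathcal{A}}{=}$ and $\stackrel{\mathcal{B}}{=}$ coincide with topological indistinguishability in the canonical topologies (by Corollary \ref{cor:3.2}, two distributions are topologically distinguishable iff they disagree on some event), so passing to $\mathrm{KQ}$ turns the a.e.\ relations of Lemma \ref{lem:4.4-2} into genuine equalities. Hence $\tau$ and $\tau^{*}$ descend to mutually inverse continuous linear maps on the Kolmogorov quotients, giving simultaneously a linear isomorphism and a homeomorphism.

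For $(\mathrm{ii})\Rightarrow(\mathrm{i})$ I would run the same chain backwards. Given a map $\varphi\colon\mathrm{KQ}(\mathcal{P},\mathcal{T}_{\mathcal{A}})\to\mathrm{KQ}(\mathcal{Q},\mathcal{T}_{\mathcal{B}})$ that is simultaneously a linear isomorphism and a homeomorphism, its linearity realises it as the restriction of a continuous linear operator between the ambient $L^{1}$-spaces, so Lemma \ref{lem:3.8-1}(ii)$\Rightarrow$(i) yields a Statistical Morphism $f\colon\mathcal{P}\to\mathcal{Q}$. The same reasoning applied to $\varphi^{-1}$ yields a Statistical Morphism $f^{*}\colon\mathcal{Q}\to\mathcal{P}$. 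Because $\varphi^{-1}\circ\varphi$ and $\varphi\circ\varphi^{-1}$ are the identities on the respective Kolmogorov quotients, the compositions $f^{*}\circ f$ and $f\circ f^{*}$ are identities up to topological indistinguishability in the canonical topology, i.e.\ up to $\mathcal{A}$- respectively $\mathcal{B}$-identity; thus $f$ and $f^{*}$ satisfy the hypotheses (i) and (ii) of Lemma \ref{lem:4.4-2}, so $f$ is a statistical isomorphism, and Proposition \ref{prop:7} delivers statistical equivalence of the two models.

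The main obstacle I anticipate is the justification that $\stackrel{\mathcal{A}}{=}$ really is the same as topological indistinguishability in the canonical topology $\mathcal{T}_{\mathcal{A}}$; this is what glues the two halves of the argument together and is where Corollary \ref{cor:3.2} has to be invoked explicitly. A secondary subtlety is that Lemma \ref{lem:3.8-1} is stated with a fixed underlying measurable function $T$, whereas the statement to be proved is purely in terms of model spaces and their canonical topologies; I would handle this by taking $T$ to be the measurable function supplied by the Markov process from Proposition \ref{prop:7} in the forward direction, and by noting that any continuous linear $\varphi$ on the quotients lifts, via Lemma \ref{lem:3.8-1}, to a morphism whose underlying $T$ is determined by its action on Dirac measures in the reverse direction.
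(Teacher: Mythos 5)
Your proposal follows essentially the same route as the paper's own proof: Proposition \ref{prop:7} together with Lemma \ref{lem:4.4-2} produces the mutually inverse statistical morphisms, these descend to the Kolmogorov quotients and are shown continuous via Lemma \ref{lem:3.8-1}, and the converse direction runs the same chain backwards by lifting the quotient map to a Statistical Morphism commuting with $T=T_{Y}\circ T_{X}^{-1}$. If anything, you are more explicit than the paper on the two points it glosses over, namely that $\stackrel{\mathcal{A}}{=}$ coincides with topological indistinguishability in the canonical topology (via Corollary \ref{cor:3.2}) and that the induced quotient maps are genuinely \emph{linear} isomorphisms rather than merely homeomorphisms, which is needed for the full strength of statement (ii).
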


\begin{proof}
``$\mathrm{(i)}\,\Longrightarrow\mathrm{(ii)}$'': Let $(X,\,\mathcal{A},\,\mathcal{P})$
and $(Y,\,\mathcal{B},\,\mathcal{Q})$ be statistical equivalent.
Then there exists a Statistical Morphism $\mathfrak{T}\colon\mathcal{P}\to\mathcal{Q}$,
as well as a dual Statistical Morphism $\mathfrak{T}^{*}\colon\mathcal{P}\to\mathcal{Q}$,
such that: 
\[
(\mathfrak{T}^{*}\circ\mathfrak{T})(P)\in\mathrm{id}_{\mathcal{A}}(P),\,\forall P\in\mathcal{P}
\]
and 
\[
(\mathfrak{T}^{*}\circ\mathfrak{T})(Q)\in\mathrm{id}_{\mathcal{B}}(Q),\,\forall Q\in\mathcal{Q}
\]
For 
\[
f\colon\mathrm{KQ}(\mathcal{P},\,\mathcal{T}_{\mathcal{A}})\to\mathrm{KQ}(\mathcal{Q},\,\mathcal{T}_{\mathcal{B}})
\]
\[
f(P)\coloneqq\mathfrak{T}(P),\,\forall P\in\bigslant{\mathcal{P}}{\mathrm{id}_{\mathcal{A}}}
\]
and 
\[
f^{*}\colon\mathrm{KQ}(\mathcal{Q},\,\mathcal{T}_{\mathcal{B}})\to\mathrm{KQ}(\mathcal{P},\,\mathcal{\tau}_{\mathcal{A}})
\]
\[
f^{*}(Q)\coloneqq\mathfrak{T}^{*}(Q),\,\forall Q\in\bigslant{\mathcal{Q}}{\mathrm{id}_{\mathcal{B}}}
\]
it then follows, that: 
\[
(f^{*}\circ f)(P)=P,\,\forall P\in\bigslant{\mathcal{P}}{\mathrm{id}_{\mathcal{A}}}
\]
\[
(f\circ f^{*})(Q)=Q,\,\forall Q\in\bigslant{\mathcal{Q}}{\mathrm{id}_{\mathcal{B}}}
\]
such that $f$ is invertible. Since due to lemma \ref{lem:3.8-1}
$f$, as well as its inverse $f^{*}$ are continuous, $f$ is a homeomorphism.
This proves, that there exists a homeomorphism 
\[
f\colon\mathrm{KQ}(\mathcal{P},\,\mathcal{T}_{\mathcal{A}})\to\mathrm{KQ}(\mathcal{Q},\,\mathcal{T}_{\mathcal{B}})
\]
and therefore that $(\mathcal{P},\,\mathcal{T}_{\mathcal{A}})$ and
$(\mathcal{Q},\,\mathcal{T}_{\mathcal{B}})$ are Kolmogorov equivalent
.

``$\mathrm{(ii)}\,\Longrightarrow\mathrm{(i)}$'': Let now$(\mathcal{P},\,\mathcal{T}_{\mathcal{A}})$
and $(\mathcal{Q},\,\mathcal{T}_{\mathcal{B}})$ be Kolmogorov equivalent,
then there exists a homeomorphism 
\[
f\colon\mathrm{KQ}(\mathcal{P},\,\mathcal{T}_{\mathcal{A}})\to\mathrm{KQ}(\mathcal{Q},\,\mathcal{T}_{\mathcal{B}})
\]
Let 
\[
T_{X}\colon(\Omega,\,\mathcal{F},\,\mathrm{P})\to(X,\,\mathcal{A})
\]
\[
T_{Y}\colon(\Omega,\,\mathcal{F},\,\mathrm{P})\to(Y,\,\mathcal{B})
\]
be the statistics, that generate the sample spaces. Then $T\coloneqq T_{Y}\circ T_{X}^{-1}$
is a measurable function with: 
\[
T\colon(X,\,\mathcal{A})\to(Y,\,\mathcal{B})
\]
Then due to the proof of lemma \ref{lem:3.8-1} there exists a homeomorphism
\[
g\colon\mathrm{KQ}(\mathcal{Q},\,\mathcal{T}_{\mathcal{B}})\to\mathrm{KQ}(\mathcal{Q},\,\mathcal{T}_{\mathcal{B}})
\]
such that $g\circ f$ defines a Statistical Morphism 
\[
g\circ f\colon\bigslant{\mathcal{P}}{\mathrm{id}_{\mathcal{A}}}\to\bigslant{\mathcal{Q}}{\mathrm{id}_{\mathcal{B}}}
\]
which commutes with $T$. Since $f$, as well as $g$ are homeomorphisms
also $g\circ f$ is a homeomorphism and its inverse $(g\circ f)^{-1}$
is continuous. Let now
\[
\pi_{\mathcal{A}}\colon\mathcal{P}\to\bigslant{\mathcal{P}}{\mathrm{id}_{\mathcal{A}}}
\]
\[
\pi_{\mathcal{B}}\colon\mathcal{Q}\to\bigslant{\mathcal{Q}}{\mathrm{id}_{\mathcal{B}}}
\]
be natural projections that satisfy: 
\[
\mathfrak{T}\coloneqq g\circ f\circ\pi_{\mathcal{A}}
\]
\[
\mathfrak{T}^{*}\coloneqq(g\circ f)^{-1}\circ\pi_{\mathcal{B}}
\]
Then $\mathfrak{T}$ and $\mathfrak{T}^{*}$ are Statistical Morphisms
with: 
\[
(\mathfrak{T}^{*}\circ\mathfrak{T})(P)\in\mathrm{id}_{\mathcal{A}}(P),\,\forall P\in\mathcal{P}
\]
\[
(\mathfrak{T}^{*}\circ\mathfrak{T})(Q)\in\mathrm{id}_{\mathcal{B}}(Q),\,\forall Q\in\mathcal{Q}
\]
This proves, that $\mathfrak{T}:\mathcal{P}\to\mathcal{Q}$ is a statistical
isomorphism.
\end{proof}

\section{\label{subsec:Structural-equivalence}Structural Equivalence}

The natural correspondence between a statistical model and its corresponding
canonical topological statistical model, provides a characterisation
of statistical equivalence with respect to the underlying topology.
This property allows an intuitive description of many classical concepts
of statistics, like sufficiency and completeness by homeomorphic embeddings
and surjective continuous functions. Apart of its characterising qualities
however a generic structural representation provides the opportunity
to incorporate a priori knowledge in terms of \emph{structural assumptions}.
As this knowledge usually regards the structure of the statistical
population, it has to be transferred to the statistical model.

In particular for statistical models, which are based on natural observations,
the statistic $T$ thereby does not preserve all statistical parameters,
such that also the observable structure only partially represents
the structure of the underlying statistical population. In order to
incorporate structural assumptions within the statistical model it
is therefore useful to embed the statistical model within an ``extended
statistical model'', that is able to represent the structure of the
statistical population. Then structural assumptions regarding the
statistical population correspond to constraints to the structural
embedding and therefore implicitly effect the observable structure.
Although the structural embedding theoretically does not demand the
interpretation as a statistical model, it facilitates the transfer
of concepts. In this purpose, an underlying sample space is constructed,
to extend the sample space, which is given by the statistic $T$,
by the set of unobserved statistical parameters. This provides the
notation of a partially observable measurable space.
\begin{defn*}[Partially observable measurable space]
\label{def:Partially-observable-measurable-space} \emph{Let $T\colon(\Omega,\,\mathcal{F},\,\mathrm{P})\to(S,\,\Sigma)$
be a statistic. Then a measurable space $(E,\,\mathcal{E})$ is termed
partially observable by $T$, iff there exists a measurable space
$(H,\,\mathcal{H})$, such that $(E,\,\mathcal{E})=(S\times H,\,\Sigma\otimes\mathcal{H})$.}
\end{defn*}
Due to this definition any $E$-valued random variable over $(\Omega,\,\mathcal{F},\,\mathrm{P})$
splits into a $S$-valued \emph{observable} random variable $v$ and
a $H$-valued \emph{latent} random variable $h$. Furthermore the
probability distributions over $(E,\,\mathcal{E})$ are defined by
marginal probability distributions $\mathcal{P}$ over $(S,\,\Sigma)$
and $\mathcal{Q}$ over $(H,\,\mathcal{H})$, as well as by regular
conditional probabilities $\mathrm{p}(v\mid h,\,P,\,Q)$ and $\mathrm{p}(h\mid v,\,P,\,Q)$,
where $P\in\mathcal{P}$ and $Q\in\mathcal{Q}$. This allows the definition
of an extended statistical model, which underlying sample space also
incorporates unobserved statistical parameters by latent random variables.
\begin{defn*}[Latent variable model]
\label{def:Latent-variable-model}\emph{ Let $(E,\,\mathcal{E})$
be a partially observable measurable space and $\mathcal{M}$ a set
of probability distributions over $(E,\,\mathcal{E})$. Then the tuple
$(E,\,\mathcal{E},\,\mathcal{M})$ is termed a latent variable model.}
\end{defn*}
The embedding of a given statistical model into a latent variable
statistical model that represents the statistical population provides
an intuitive description of the structural properties of the statistical
model. In particular however structural assumptions do not only effect
``\emph{known unknowns}'', as in the case of individual distributions
assumptions, but indeed ``\emph{unknown unknowns}''. This is of
particular importance for the statistical modelling of complex dynamical
systems and especially for living cells, where it has to be assumed,
that the underlying biomolecular mechanisms have only partially been
identified. In order to provide statistical inference in living cells
hence structural assumptions are required, that are able (i) to generate
tractable model spaces and (ii) to facilitate statistical inference.
Apart of the theoretical elegance of a structural embedding this approach
at first sight however seems rather impractical, since the canonical
topology is usually not tangible to an observer. Indeed without any
structural knowledge, the determination of the canonical topology
would require a complete observation of the underlying statistical
model. The situation is quite different, if the extended statistical
model may be embedded within a topological vector space $V$. Then
(i) the subspace topology of the embedding of its observable random
variables identifies the canonical topology and (ii) the algebraic
structure of the vector space facilitates the traversal of the extended
model space and therefore statistical inference. This embedding however
has to be invariant with respect to statistical equivalent statistical
models. To this end in a preceding step, a surjective Statistical
Morphism $\pi:\mathcal{\bigslant{\mathcal{M}}{\mathrm{id}_{\Sigma}}}\twoheadrightarrow\mathcal{M}_{\theta}$,
termed a \emph{statistical projection} maps the model space $\mathcal{M}$
onto a \emph{parametric family} $\mathcal{M}_{\theta}\subseteq\mathcal{M}$
and a subsequent structural embedding $e\colon\mathcal{M}_{\theta}\hookrightarrow V$
into a vector space $V$. This defines a \emph{parametrisation} of
$\mathcal{M}$.
\begin{defn*}[Parametrisation]
\label{def:Parametrisation}\emph{ Let $(S,\,\Sigma,\,\mathcal{M})$
be a statistical model, $V$ a $\mathbb{K}$-vector space, 
\[
\pi\colon\bigslant{\mathcal{M}}{\mathrm{id}_{\Sigma}}\twoheadrightarrow\mathcal{M}_{\theta}\subseteq\mathcal{M}
\]
 a statistical projection and 
\[
e\colon\mathcal{M}_{\theta}\hookrightarrow\Theta\subseteq V
\]
 an embedding. Then a mapping 
\[
\theta\colon V\to\bigslant{\mathcal{M}}{\mathrm{id}_{\Sigma}}
\]
 is termed a parametrisation of $\mathcal{M}$ over $V$, iff the
following diagram commutes:}
\begin{equation}
\xymatrix{V\ar@/^{1pc}/[rr]^{\theta} &  & \bigslant{\mathcal{M}}{\mathrm{id}_{\Sigma}}\ar@{->>}[ld]\\
 & \mathcal{M}_{\theta}\ar@{_{(}->}[lu]^{e}
}
\label{eq:def:Parametrisation}
\end{equation}
\end{defn*}
Generally a parametrisation $\theta$ is not required to be a function
in terms of unique image elements, but only with respect to the preimage of
$e\circ\pi$, such that $\mathrm{img}\theta=\bigslant{\mathcal{M}}{\mathrm{id}_{\Sigma}}$.
Therefore an arbitrary parametrisation does not provide the ability
to identify individual probability distributions by different parameters.
As the probability distributions in the parametric family $\pi(\mathcal{M})=\mathcal{M}_{\theta}\subseteq\mathcal{M}$
however are trivially observational distinguishable, the additional
claim that $\pi$ is not only a Statistical Morphism, but indeed a
statistical isomorphism provides that $\bigslant{\mathcal{M}}{\mathrm{id}_{\Sigma}}\backsimeq\bigslant{\mathcal{M}_{\theta}}{\mathrm{id}_{\Sigma}}$
and since $\bigslant{\mathcal{M}_{\theta}}{\mathrm{id}_{\Sigma}}=\mathcal{M}_{\theta}$
it follows, that $\bigslant{\mathcal{M}}{\mathrm{id}_{\Sigma}}\backsimeq\mathcal{M}_{\theta}$.
Since furthermore embeddings $e\colon\mathcal{M}_{\theta}\hookrightarrow V$
are injective it also holds that $\bigslant{\mathcal{M}}{\mathrm{id}_{\Sigma}}\backsimeq\mathrm{img}(e\circ\pi)$,
and therefore with respect to the \emph{parameter space} $\Theta\coloneqq\mathrm{dom}\theta=\mathrm{img}(e\circ\pi)$,
that $\bigslant{\mathcal{M}}{\mathrm{id}_{\Sigma}}\backsimeq\Theta$.
Then for any probability distributions $P,\,Q\in\mathcal{M}$, that
are parametrised by parameter vectors $\theta_{P},\,\theta_{Q}\in\Theta$
with $\theta_{P}\ne\theta_{Q}$ it follows that $P\notin\mathrm{id}_{\Sigma}Q$
and therefore, that $P$ and $Q$ are observational distinguishable.
Consequentially the requirement of the projection $\pi$ to be a statistical
isomorphism implicates that different parameter vectors provide different
probability distributions. Then $\theta$ is termed \emph{identifiable}.
\begin{defn*}[Identifiable parametrisation]
\label{def:Identifiable-parametrisation} \emph{Let 
\[
(S,\,\Sigma,\,\mathcal{M})\in
\]
be a statistical model, $V$ a $\mathbb{K}$-vector space and $\theta$
a parametrisation of $\mathcal{M}$ over $V$. Then $\theta$ is termed
identifiable, iff its underlying statistical projection $\pi$ is
a statistical isomorphism.}
\end{defn*}
A statistical model $(S,\,\Sigma,\,\mathcal{M})$ is termed \emph{identifiable}
if there exists an identifiable parametrisation $\theta$ over a vector
space $V$. In this case it follows, that\emph{ $\bigslant{\mathcal{M}}{\mathrm{id}_{\Sigma}}\backsimeq\bigslant{\mathcal{M}_{\theta}}{\mathrm{id}_{\Sigma}}=\mathcal{M}_{\theta}$}
and since the diagram equation \ref{eq:def:Parametrisation} is required to
commute, that $\mathcal{M}_{\theta}=\mathcal{M}$. Consequently the
notation $(S,\,\Sigma,\,\mathcal{M}_{\theta})$ naturally implicates
an identifiable parametrisation $\theta$ and therefore an identifiable
statistical model. In this case it follows that $\Theta=\mathrm{dom}\theta\backsimeq\mathrm{img}\theta\backsimeq\mathcal{M}_{\theta}$,
such that any probability distribution $P_{\theta}\in\mathcal{\ensuremath{M}_{\theta}}$
uniquely identifies a parameter vector $\theta_{P}\in\Theta$ by $\theta_{P}\coloneqq\theta^{-1}(P)$
and vice versa by $P_{\theta}\coloneqq\theta(\theta_{P})$. Then the
parameter space $\Theta$ is a \emph{parametric representation} of
the statistical model and identifiable statistical models may be compared
by their coincidence in a common parametric representation. Thereby
statistical models $(X,\,\mathcal{A},\,\mathcal{M})$ and $(Y,\,\mathcal{B},\,\mathcal{N})$
are \emph{equivalent in representation}, iff there exist identifiable
parametrisations $\theta_{\mathcal{M}}\colon\Theta_{\mathcal{M}}\to\mathcal{M}$
and $\theta_{\mathcal{N}}\colon\Theta_{\mathcal{N}}\to\mathcal{N}$
such that $\Theta_{\mathcal{M}}=\Theta_{\mathcal{N}}$. This statistically
invariant property however is completely described by the cardinality
of their identifiable parametrisations.
\begin{example*}[Cardinality of a parametrisation]
\label{exa:Cardinality-of-a-parametrisation} \emph{Let $(S,\,\Sigma,\,\mathcal{M})$
be a statistical model, $V$ a vector space, and $\theta\colon\Theta\to\mathcal{M}$
a parametrisation of $\mathcal{M}$ over $V$. Then the cardinality
of $\theta$ is given by the cardinality of the parameter space $\Theta$.}
\end{example*}
Since the cardinality of an identifiable parametrisation quantifies
the number of observational distinguishable probability distributions
within the underlying model space, statistical models are equivalent
in representation, iff they have identifiable parametrisations with
equal cardinality. Then there exist parametrisations $\theta_{\mathcal{M}}$
and $\theta_{\mathcal{N}}$, and such that $\mathcal{M}=(\theta_{\mathcal{M}}\circ\theta_{\mathcal{N}}^{-1})(\mathcal{N})$
and $\mathcal{N}=(\theta_{\mathcal{N}}\circ\theta_{\mathcal{M}}^{-1})(\mathcal{M})$.
In this case the functions $\theta_{\mathcal{NM}}=\theta_{\mathcal{M}}\circ\theta_{\mathcal{N}}^{-1}$
and $\theta_{\mathcal{M}}=\theta_{\mathcal{N}}\circ\theta_{\mathcal{M}}^{-1}$
are termed \emph{re-parametrisations}. Thereby it is important to
notice, that without further requirements re-parametrisations only
preserve the primordial distinguishability of distribution assumptions
but not their proximity structure, such that distribution assumptions,
which exclude each other in one statistical model may be nearly equivalent
in another. More for topological statistical model $(\mathcal{M},\,\tau)$
and a topological vector space $(V,\,\tau_{V})$ an identifiable parametrisation
$\theta\colon V\subseteq\Theta\to\mathcal{M}$, does not provide statistical
equivalence with respect to the subspace topology $\tau_{\Theta}$
of $\Theta$ in $V$, but only to the induced topology $\theta^{-1}(\tau)$.
Therefore re-parametrisations have to be claimed to preserve the topology,
in order to obtain statistical equivalence. As $\tau$ however is
generally not tractable by an observer it is more reasonable to require
the individual parametrisations to be continuous. This provides the
notation of a \emph{continuous parametrisation}. 
\begin{defn*}[Continuous parametrisation]
\label{def:Continuous-parametrisation} \emph{Let 
\[
(X,\,\mathcal{A},\,\mathcal{M},\,\mathcal{T})
\]
be a topological statistical model, $(V,\,\mathcal{T}_{V})$ a topological
vector space and $\theta$ a parametrisation of $\mathcal{M}$ over
$V$. Then $\theta$ is continuous, iff 
\[
\theta^{-1}\colon\mathrm{KQ}(\mathcal{M},\,\mathcal{T})\to(V,\,\mathcal{T}_{V})
\]
is a continuous embedding.}
\end{defn*}
By assuming a continuous identifiable parametrisation $\theta\colon\Theta\to\mathcal{M}$,
then the parameter space $\Theta$ naturally extends to a \emph{continuous
representation} $(\Theta,\,\mathcal{T}_{\Theta})$ of $(\mathcal{M},\,\mathcal{T})$,
where $\mathcal{T}_{\Theta}$ denotes the subspace topology of $\Theta$
in $V$. Then also 
\[
\theta\colon(\Theta,\,\mathcal{T}_{\Theta})\to\mathrm{KQ}(\mathcal{M},\,\mathcal{T})
\]
is a homeomorphism and since 
\[
\mathrm{KQ}(\Theta,\,\mathcal{T}_{\Theta})=(\Theta,\,\mathcal{T}_{\Theta})
\]
Theorem \ref{thm:3.6} proposes the existence of a statistical isomorphism
between $\Theta$ and $\mathcal{M}$, such that statistical inference
may also be derived within $(\Theta,\,\mathcal{T}_{\Theta})$ as a
subspace of $V$. Moreover any further statistical model, that has
a continuous representation, which is homeomorphic to $(\Theta,\,\mathcal{T}_{\Theta})$
is statistical equivalent to $(\mathcal{M},\,\mathcal{T})$, such
that $(\Theta,\,\mathcal{T}_{\Theta})$ is a continuous representation
of both models. In particular identifiable statistical models are
statistical equivalent, iff they are equivalent in a common continuous
representation. Therefore structure preserving re-parametrisations
naturally generalize statistical equivalence. Thereby the re-parametrisations
represent isomorphisms of a \emph{structural category} $C$, which
provides the notation of a generic \emph{structural equivalence}.
\begin{defn*}[Structural equivalence]
\label{def:Structural-equivalence}\emph{ Let $(X,\,\mathcal{A},\,\mathcal{M})$
and $(Y,\,\mathcal{B},\,\mathcal{N})$ be statistical models, $C$
a category and $(\mathcal{M},\,\varphi)$ and $(\mathcal{N},\,\vartheta)$
objects in $C$. Then $(X,\,\mathcal{A},\,\mathcal{P})$ and $(Y,\,\mathcal{B},\,\mathcal{Q})$
are structural equivalent w.r.t $C$, iff $(\mathcal{M},\,\varphi)\stackrel{C}{\backsimeq}(\mathcal{Q},\,\vartheta)$.}
\end{defn*}
As the structural equivalence of statistical models may be defined
for arbitrary underlying structural categories, it does not necessarily
correspond with statistical equivalence. In the purpose of statistical
inference however this correspondence has to be obtained by structural
assumptions. For example if the statistical models are assumed to
be identifiable, then due to theorem \ref{thm:3.6} statistical equivalence
corresponds to Kolmogorov equivalence of identifiable topological
statistical models. In this case the subset of statistical isomorphisms
that satisfy the requirement of identifiability are precisely the
homeomorphic re-parametrisations of ``identifiable topological statistical
models''. Then arbitrary additional assumptions at least have to
preserve the topology of continuous representations, such that statistical
inference may completely be derived by the evaluation of estimands
along paths within continuous representations. The great advantage
of ``structural equivalence'' over ``statistical equivalence''
however is, that it is not restricted to observable structures. This
means, that structural assumptions regarding the statistical population
may also implicitly affect the observable structure. Then statistical
inference may be obtained by a projection of an structural estimator
to the ``closest'' observable distribution assumption. However the
structural assumptions thereby have to be ``compatible'' with the
observable structure and therefore have to be statistical invariants,
as well as structural invariants of the structural category. An example
of such a \emph{structural property} of a statistical model has already
been given by the cardinality of an identifiable parametrisation,
which for the category of ``identifiable topological statistical
models'' provides the number of observational distinguishable probability
distributions. The cardinality however is unsuitable to distinguish
statistical models that comprise infinite observational distinguishable
distribution assumptions. Nevertheless in the very same manner as
the cardinality of an identifiable parametrisation determines the
number of distinguishable probability distributions, the minimal length
of structure preserving identifiable parametrisations determines the
number of distinguishable dimensions.
\begin{example*}[Length of a parametrisation]
\label{exa:Length-of-a-parametrisation} \emph{Let $(\mathcal{M},\,\mathcal{T})$
be a topological statistical model, $V$ a vector space, and $\theta\colon\Theta\to\mathcal{M}$
a parametrisation of $\mathcal{M}$ over $V$. Then the length of
$\theta$ is given by the dimension of the parameter space $\Theta$
in $V$. }
\end{example*}
The length of a given identifiable parametrisation is obviously not
a structural property of a statistical model, since it depends on
the chosen parametrisation. The minimal length of a structure preserving
identifiable parametrisation however, only depends on the structural
category, that has to be preserved. Therefore it provides a reasonable
assumption, under the precondition of an underlying category. If this
structural category is then induced by a structural assumption, then
the minimal length of a structure preserving identifiable parametrisation
is indeed a statistical property. This relationship therefore provides
a justification of \emph{minimal parametrisations} as canonical representations
of parametric families.
\begin{defn*}[Minimal parametrisation]
\label{def:Minimal-parametrisation} \emph{Let $(\mathcal{M},\,\mathcal{T})$
be a topological statistical model, $C$ a category and $\theta\colon\Theta\to\mathcal{M}$
a parametrisation of $\mathcal{M}$ over $V$. Then $\theta$ is termed
minimal w.r.t $C$ if }(i)\emph{ $\theta$ is identifiable, }(ii)\emph{
$\theta$ is an isomorphism within $C$ and }(iii) \emph{$\theta$
has a minimal length under the previous constraints.}
\end{defn*}
In the purpose to derive a minimal parametrisation, the choice of
the structural category is crucial. For example by regarding model
spaces as sets, then in the structural category $\mathbf{Set}$ the
re-parametrisations are bijections and a minimal parametrisation always
has length $1$, as long as the cardinality of the parametrisation
is smaller than that of the vector space $V$, otherwise there exists
no identifiable parametrisation over $V$. For many other structural
categories however, the derivation of a minimal parametrisation is
only hardly tractable. Then for a given statistical model and a given
vector space $V$ the question arises, if at least an identifiable
parametrisation of $\mathcal{M}$ over $V$ exists with finite length.
For example if the structural category extends the canonical topology
by a global Euclidean structure, then the structural isomorphisms
are linear mappings and the length of a minimal parametrisation equals
the minimal number of $V$-valued, linear independent random variables,
that are needed to describe any probability distribution in $\mathcal{M}$.
If $\mathcal{M}$ then contains at least one probability distribution,
which is not finite dimensional over $V$, then there exists no identifiable
parametrisation with finite length. In particular with a growing degree
of structure it is getting more difficult, or even impossible to obtain
structure preserving embeddings within vector spaces and therefore
to find appropriate identifiable parametrisations. In the purpose
to derive statistical inference by structural inference it is therefore
crucial to study the intrinsic structural properties of statistical
models.

\bibliographystyle{unsrt}
\bibliography{bibliography/articles}

\begin{thebibliography}{1}

\bibitem{Csiszar1963}
I.~Csiszár.
\newblock {Eine informationstheoretische Ungleichung und ihre Anwendung auf den
  Beweis der Ergodizitat von Markoffschen Ketten}.
\newblock {\em Magyar. Tud. Akad. Mat. Kutato Int. Kozl.}, (8):85--108, 1963.

\bibitem{Fisher1935}
Ronald~Aylmer Fisher.
\newblock {The Logic of Inductive Inference}.
\newblock {\em Journal of the Royal Statistical Society}, 98(1):39--82, 1935.

\bibitem{Kullback1959}
Solomon Kullback.
\newblock {\em {Information Theory and Statistics}}, volume~1.
\newblock Wiley, 1959.

\bibitem{Kullback1951}
Solomon Kullback and R.~A. Leibler.
\newblock {On Information and Sufficiency}.
\newblock {\em The Annals of Mathematical Statistics}, 22(1):79--86, 1951.

\bibitem{Morimoto1963}
T.~Morimoto.
\newblock {Markov processes and the H-theorem}.
\newblock {\em J. Phys. Soc. Jpn}, 18(3):328--331, 1963.

\end{thebibliography}

\end{document}